% Modif. April 30, 2010
% Send comments to publ@impan.pl

\documentclass[10pt]{article}
\usepackage{amsmath,amsthm,amssymb,color}
\usepackage{times}
\usepackage{enumerate,graphicx}

\pagestyle{myheadings}
\def\titlerunning#1{\gdef\titrun{#1}}
\makeatletter
\def\author#1{\gdef\autrun{\def\and{\unskip, }#1}\gdef\@author{#1}}
\def\address#1{{\def\and{\\\hspace*{18pt}}\renewcommand{\thefootnote}{}%
\footnote {#1}}%
\markboth{\autrun}{\titrun}}
\makeatother
\def\email#1{e-mail: #1}
\def\subjclass#1{{\renewcommand{\thefootnote}{}%
\footnote{\emph{Mathematics Subject Classification (2010):} #1}}}
\def\keywords#1{\par\medskip
\noindent\textbf{Keywords.} #1}

%% Numbered objects of "theorem" style (text italicized).
%% The optional parameters indicate that all objects are numbered together, and "by section".
%% However, you are welcome to use any other numbering system of your choice.

\newtheorem{lem}{Lemma}[section]

\newtheorem{theorem}[lem]{Theorem}
\newtheorem{proposition}[lem]{Proposition}

\newtheorem{corollary}[lem]{Corollary}

\newtheorem{problem}[lem]{Problem}

\newtheorem{remark}[lem]{Remark}

\newcommand{\Qed}{\rule{2.5mm}{3mm}}
\newcommand{\Aut}{\hbox{Aut}}

\newcommand{\la}{\langle}
\newcommand{\ra}{\rangle}

\newcommand{\ZZ}{\mathbb{Z}}

\newcommand{\F}{\mathrm{F}}
\newcommand{\A}{\mathrm{A}}
\newcommand{\B}{\mathrm{B}}

\newcommand{\Fix}{\mathrm{Fix}}

\newcounter{claim}

\renewcommand{\theclaim}{\arabic{claim}}

\newcounter{case}

\renewcommand{\thecase}{\arabic{case}}

\newcounter{subcase}

\numberwithin{subcase}{case}

\newcounter{subsubcase}

\numberwithin{subsubcase}{subcase}

\newenvironment{proofT}{{\noindent \sc Proof of Theorem~\ref{thm:main}.}}{\hfill $\Qed$ \\}

%% A numbered theorem with a fancy name:

%\newtheorem{mainthm}[thm]{Main Theorem}

%% Numbered objects of "non-theorem" style (text roman):

%\theoremstyle{definition}
%\newtheorem{defin}[thm]{Definition}
%\newtheorem{rem}[thm]{Remark}
%\newtheorem{exa}[thm]{Example}

%% An unnumbered remark:

%% Equations numbered by section:

\numberwithin{equation}{section}

%%%%%%%%%%% For JEMS
\frenchspacing

 \textwidth=125mm
 \textheight=195mm
%\parindent=16pt
%\oddsidemargin=-0.5cm
%\evensidemargin=-0.5cm
%\topmargin=-0.5cm

%\textheight 224 true mm
%\textwidth 163 true mm
\voffset=-15mm
\hoffset=-18mm

%%%%%%%%%%%%%%%%%%%%%%%%%%%%%%%%%%%
%%%%%%%%%%%%%%%%%%%%%%%%%%%%%%%%%%%

%%%% Put your macros here:

%%%%%%%%%%%%%

\begin{document}

%%%%% To ease editing, add:

\baselineskip=17pt

%%%%%%%%%%%%%%%%

%% In the running head, give an abbreviation of the title. 
\titlerunning{Odd extensions of transitive groups via symmetric graphs}

\title{Odd extensions of transitive groups\\ via symmetric graphs}

\author{Klavdija Kutnar
\and 
Dragan Maru\v si\v c}

\date{}

\maketitle

\address{ K.~Kutnar: \\
\indent \indent University of Primorska, UP IAM, Muzejski trg 2, 6000 Koper, Slovenia;\\
\indent \indent University of Primorska, UP FAMNIT, Glagolja\v ska 8, 6000 Koper, Slovenia;\\ 
\indent \indent \email{klavdija.kutnar@upr.si}
\and
D.~Maru\v si\v c (Corresponding author): \\
\indent \indent University of Primorska, UP IAM, Muzejski trg 2, 6000 Koper, Slovenia;\\
\indent \indent University of Primorska, UP FAMNIT, Glagolja\v ska 8, 6000 Koper, Slovenia;\\
\indent \indent IMFM, Jadranska 19, 1000 Ljubljana, Slovenia;\\
\indent \indent \email{dragan.marusic@upr.si}}

\subjclass{Primary 05C25; Secondary 20B25}

~\vspace*{-2.5cm}
 
 %\end{document}

\begin{abstract}
When dealing with symmetry properties of mathematical objects, one of the fundamental 
questions is to determine their full automorphism group.
In this paper this question is considered in the context of even/odd permutations
dichotomy. More precisely:  
when is it that existence of automorphisms  
acting as even permutations on the vertex set of a graph, 
called {\em even automorphisms}, forces existence of automorphisms
that  act as odd permutations, 
called {\em odd automorphisms}.
As a first step towards resolving the above question, 
a complete information on existence of odd automorphisms in cubic
symmetric graphs is given.

%% Keywords are optional
\keywords{automorphism group, arc-transitive, even permutation, odd permutation, cubic symmetric graph} 
\end{abstract}

%%%%%%%%%%%%%%%%%%%%%%%%%%%%%%%%%%%%%%%%%%%%%%%%%%%%%%
%%%%%%%%%%%%%%%%%%%%%%%%%%%%%%%%%%%%%%%%%%%%%%%%%%%%%%
%%%%%%%%%%%%%%%%%%%%%%%%%%%%%%%%%%%%%%%%%%%%%%%%%%%%%%
\section{Introductory remarks}
\label{sec:intro}
\noindent
%%%%%%%%%%%%%%%%%%%%%%%%%%%%%%%%%%%%%%%%%%%%%%%%%%%%%%
%%%%%%%%%%%%%%%%%%%%%%%%%%%%%%%%%%%%%%%%%%%%%%%%%%%%%%
%%%%%%%%%%%%%%%%%%%%%%%%%%%%%%%%%%%%%%%%%%%%%%%%%%%%%%

Finding the full automorphism group is one of the fundamental objectives
when dealing with symmetry properties of mathematical objects, such as for example 
vertex-transitive graphs, see
 \cite{M1,M2,M4,M6,M9,M10}. 
Many of these objects naturally display certain inherently obvious symmetries.  It is often the case, however, that certain additional symmetries, though hidden or difficult to grasp, are present. 
The goal is to find a reason for their existence and a method for describing them. 
Along these lines the above question reads as follows: 
{\em Given a (transitive) group $H$ acting on the set of vertices of a graph,
determine whether $H$ is its full automorphism group or not. When the answer is no, find a method to describe the additional automorphisms.} In other words, decide
whether the group $H$ imbeds into a larger group $G$ preserving the structure of the graph in question. 
 
In this paper such group ``extensions'' are considered by studying 
the existence of automorphisms that act as odd permutations on the vertex set of a graph. 
Such an automorphism will be referred to as {\em odd automorphism}.
Analogously, an automorphism acting as an even permutation on the vertex set
will be referred to as an {\em even automorphism}. 

The implications are more far-reaching than one would expect from the simplicity of the concept of even and odd automorphisms alone. 
Some arguments supporting this claim are given in the paragraphs following
formulation of Problems~\ref{problem-graph} and~\ref{problem-group}. 
For example, when $H$ consists of even automorphisms only, a partial answer to the above question could be given provided the structure of the graph in question  forces existence of odd automorphisms. 
We propose the  following problem.

\begin{problem}
\label{problem-graph}
Which vertex-transitive graphs admit odd automorphisms?
\end{problem}

It is convenient to consider Problem~\ref{problem-graph}  in 
the framework of orbital (di)graphs of transitive groups. Namely, isomorphism classes of vertex-transitive (di)graphs are in a one-to-one correspondence 
with orbital (di)graphs of transitive group actions, as seen below.
A transitive group $H$ acting on a set $V$ induces an action of $H$ on $V\times V$ - the corresponding orbits are called {\em orbitals} of $H$ on $V$. 
Given a union $\mathcal{O}$ of orbitals not containing the diagonal 
orbital $D=\{(v,v)\mid v\in V\}$,
the {\em orbital (di)graph} $X(H,\mathcal{O})$ of 
$H$ on $V$ with respect to $\mathcal{O}$ has vertex set
$V$ and edge set $\mathcal{O}$. 
Clearly, $H \le \Aut(X(H,\mathcal{O}))$.
If $\mathcal{O}$ consists of a single orbital
the orbital (di)graph $X(H,\mathcal{O})$ is said to be {\em basic}.
The problem below is a somewhat restricted
reformulation  of Problem~\ref{problem-graph}.
 
\begin{problem}
\label{problem-group}
Given a transitive group $H$, possibly  consisting of even permutations only,  
is there a basic orbital (di)graph of $H$ admitting an odd automorphism?
\end{problem}

Transitive groups for which the answer to Problem~\ref{problem-group}
is positive are referred to as {\em orbital-odd}.
Not all transitive groups consisting solely of even permutations 
admit orbital-odd imbeddings.
For example, the group $\mathrm{PSL}(2,17)$ acting transitively
on the set of cosets of $S_4\le \mathrm{PSL}(2,17)$  consists of even permutations only,
and is isomorphic to the full automorphism group of every basic orbital (di)graph
associated with this action. Hence, $\mathrm{PSL}(2,17)\le S_{28}$ is not orbital-odd.
On the other hand, the alternating group $A_5$ acting
transitively on the set of cosets of
$S_3\le A_5$ (which also consists of even permutations only) is orbital-odd.
Namely, the full automorphism group of its two basic orbital graphs -
the Petersen graph and its complement -  
contains odd automorphisms (and is isomorphic to the symmetric group 
$S_5$).  
More generally, as observed in \cite{Ado}, every vertex-transitive graph of order
twice a prime   admits an odd automorphism, and hence every 
transitive group of degree twice a prime   
is orbital-odd. An essential ingredient in the proof of this result is the fact 
that the above actions of $A_5$ and $S_5$ are the only simply primitive groups of degree twice a prime -- a direct consequence of the classification of finite simple groups (CFSG).
It would be of interest to obtain a CFSG-free proof of the fact 
that every vertex-transitive graph of order twice a prime admits an odd automorphism.
Combining this together with a classical Wielandt's result on simply primitive groups of degree twice a prime being of rank $3$ 
(see \cite{W56}) -- currently the best available CFSG-free information  on such groups --
one would move a step closer to obtaining a CFSG-free proof of non-existence of simply primitive groups of degree $2p$, $p\ne 5$ a prime
(see also \cite{I1,I2,I3,S72}).

Observe that, if a graph has an odd automorphism then it must have an odd automorphism whose order is a power of $2$. In fact, an odd automorphism exists if and only if there is one in a Sylow $2$-subgroup of the automorphism group, as observed in \cite{KM}. 
Consequently, for some classes of graphs the existence of odd automorphisms is easy to determine. For instance, in Cayley graphs the corresponding regular group contains odd permutations if and only if its Sylow $2$-subgroup is cyclic. When a Sylow $2$-subgroup is not cyclic, the search for odd automorphisms has to be done outside the regular group, raising the complexity of the problem.

Studying ``extensions'' via odd automorphisms is 
also essential in obtaining new approaches for solving certain long-standing 
open problems in algebraic graph theory. 
For example, in the semiregularity problem \cite{bcc15,DM81}, which asks for
the existence of semiregular automorphisms in vertex-transitive (di)graph,
knowing that the graph admits odd automorphisms 
would be helpful in the case when the underlying transitive 
group $H$ consisting of even automorphisms is elusive (that is, without semiregular elements). One could therefore hope for 
the existence of semiregular automorphisms in the odd part of the ``extension''. 
For example, the group $\textrm{AGL}(2,9)$ acting on the set of $12$ 
lines of the affine plane $AG(2,3)$ is elusive and consists of even permutations only.
Its $2$-closure, however, contains semiregular as well as odd permutations, in fact it contains
an odd semiregular permutation of order $4$.

Further, in the hamiltonicity problem \cite{L70} as well as in the snark problem for cubic Cayley graphs \cite{A}, 
important progress was recently obtained by combining algebraic methods with the theory of 
regular maps on surfaces \cite{GM07,GKMM12,HKM14}. 
Knowing the graphs admit odd automorphisms will most likely 
be helpful in our quest to solve the remaining open cases.

Our ultimate goal is to build a theory that will allow us to decide whether a given graph, 
admitting a transitive group action, does or does not have odd automorphisms. In this 
respect the first step in our strategy consists in finding necessary and sufficient conditions 
for the existence of odd automorphisms for as large as possible class  of graphs, 
admitting a transitive action of a group, with certain suitably imposed constraints. 
It is natural to expect that the solution to the question of  
existence of odd automorphisms will have some degree of arithmetic flavor.
As may be suggested by the results of this paper this is true, but only
up to a certain extent. 
 
In approaching Problems~\ref{problem-graph} and~\ref{problem-group}
in a systematic manner it
seems natural to start with the analysis of vertex-transitive graphs
of smallest possible (non-trivial) valency $3$. Such graphs fall into three classes:
graphical regular representations of groups,
vertex-transitive graphs with two edge orbits,
and symmetric graphs.
As already mentioned, graphs in the class of (cubic) 
graphical regular representations of groups admit odd automorphisms
if and only if Sylow $2$-subgroups of their automorphism groups are cyclic.  
Cubic vertex-transitive graphs with two
edge orbits (together with the corresponding tetravalent locally imprimitive symmetric graphs) 
will be considered in a separate paper.

In this  paper we give a complete solution to Problem~\ref{problem-graph}
for cubic symmetric graphs. 
(As a consequence a partial solution to Problem~\ref{problem-group}
is obtained.)
This class of graphs   
was first studied by Foster \cite{F32} and has been the source of
motivation for various research directions in algebraic graph theory.
In Foster census  \cite{B88},
a list of such graphs of order up to $512$ was produced. 
With the availability of advanced computational tools \cite{Mag}
the list was recently expanded to
graphs of order up to $10.000$, see \cite{CD,CD02}. 
Many well-known graphs are cubic symmetric graphs, arising  in connection with 
certain open problems in graph theory (such as the hamiltonicity problem \cite{L70}), 
and more generally as an important concept in
other areas of mathematics (such as finite geometry \cite{XXX09}). 
For example, the Petersen graph $\F010\A$ and the Coxeter graph $\F028\A$
are the only known symmetric graphs of order greater than $2$ without a Hamilton cycle.
Next, all bipartite cubic symmetric graphs of girth at least $6$ are Levi graphs
of $(n_3)$ configurations. For example,
the Heawood graph $\F014\A$,  
the Moebius-Kantor graph $\F016\A$,
the Pappus graph $\F018\A$ 
and the Desargues graph $\F020\B$ are Levi graphs of $(7_3)$, $(8_3)$, $(9_3)$ and $(10_3)$
configurations, respectively. 
(Hereafter the notation $\F n\A$, $\F n \B$, etc. will refer to the corresponding graphs in
the Foster census~\cite{B88,CD02}.)
There are $17$ possible types of cubic symmetric
 graphs (see Table~\ref{tab:cubic-types} in 
Subsection~\ref{sec:cubic-sym}), and 
the existence of odd automorphisms depends on these types and orders
of graphs in question.

Results of this paper do imply that the solution to Problem~\ref{problem-graph}  
is to some degree arithmetic.  
However, some special cases sprouting out in the solution for cubic
symmetric graphs suggest that the even/odd question is likely to uncover 
certain more complex structural properties of graphs (and combinatorial objects
in general), that go beyond simple arithmetic conditions.

The following is the main result of this paper.
   
\begin{theorem}
\label{thm:main}
Let $X$ be a cubic symmetric graph of order $2n$. 
In  Table~\ref{tab:odd}  a full information on 
existence of odd automorphisms in $X$ is given. In particular, the following statements hold:
\begin{enumerate}[(i)]
\itemsep=0pt
\item If $X$ is of type 
$\{1\}$, $\{1,2^1,2^2,3\}$, $\{2^1,2^2,3\}$, $\{2^1,3\}$, 
$\{2^2,3\}$, $\{1,4^1\}$, $ \{4^2\}$, $\{1,4^1,4^2,5\}$ or $\{4^1,4^2,5\}$ 
 then it has odd 
automorphisms  if and only if $n$ is odd.

\item If $X$ is of type $\{2^1\}$, $\{3\}$, $\{4^1\}$ or $\{5\}$ 
then it has odd automorphisms if and only if $n$ is odd 
and $X$ is bipartite. % {\color{red}($\Aut(X)$ is not simple)}. 

%\item If $X$ is of type  $\{1,4^1\}$ 
%then there exist odd automorphisms in $\Aut(X)$ if and only if $n$ is odd.  

\item If $X$ is of type $\{1,2^1\}$ then it has odd automorphism if and only if either $n$ is odd, or $n=2^{k-1}(2t+1)$, $k\ge 2$ and $X$ is a $(2t+1)$-Cayley graph on a cyclic group of order $2^k$. 

\item If $X$ is of type $\{2^2\}$, $\{4^1,5\}$ or $\{4^2,5\}$ then it has no odd automorphisms.
%\item {\color{red} If $X$ is of type $\{4^2\}$ then there exist odd automorphisms in 
%$\Aut(X)$ if and only if $n$ is odd. }

%\item If $X$ is of type $\{1,2^1,2^2,3\}$, $\{2^1,2^2,3\}$, $\{2^1,3\}$, 
%$\{2^2,3\}$, $ \{4^2\}$, $\{1,4^1,4^2,5\}$ or $\{4^1,4^2,5\}$ then there exist odd 
%automorphisms in $\Aut(X)$ if and only if $n$ is odd. 
\end{enumerate}
\end{theorem}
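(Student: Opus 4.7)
My plan is a type-by-type analysis over the seventeen Djokovic--Miller types of cubic symmetric graphs, using the criterion from \cite{KM} that $X$ admits an odd automorphism iff some Sylow $2$-subgroup of $\Aut(X)$ contains one. For an $s$-arc-regular cubic graph on $2n$ vertices one has $|\Aut(X)|=3n\cdot 2^s$, so the order of a Sylow $2$-subgroup is $2^s$ times the $2$-part of $n$. The type fixes both $s$ and the isomorphism class of the vertex stabilizer, and therefore determines the conjugacy classes of involutions in $\Aut(X)_v$ together with the structure of their fixed-point sets on the arcs at $v$; these in turn govern the parity of each stabilizer element.

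The basic parity tool is that a $2$-element $\sigma$ is odd iff it has an odd number of non-fixed orbits, which for an involution reduces to $(2n-|\Fix(\sigma)|)/2$ being odd. First I would dispose of the case $n$ odd, where a Sylow $2$-subgroup is generated by the Sylow $2$-subgroup of $\Aut(X)_v$ together with a single arc-flipping involution. Reading off the types, the classes in (i) always contain either a fixed-point-free involution (as happens in type $\{1\}$, where $\Aut(X)_v=\ZZ_3$ forces any involution outside stabilizers to split into exactly $n$ transpositions) or a stabilizer involution whose fixed-point count has the parity opposite to that of $n$, producing an odd element precisely when $n$ is odd. For types in (ii), every involution in $\Aut(X)_v$ has a fixed-point count whose parity matches that of $n$, so no such involution is odd, and the only remaining candidate is a bipartition-swapping involution, which exists iff $X$ is bipartite. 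For types in (iv), a direct count from the amalgam shows that every $2$-element fixes vertices in a multiple of $4$, forcing even parity regardless of $n$.

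When $n$ is even the Sylow $2$-subgroup is strictly larger than what stabilizers supply, and I would proceed by induction on the $2$-valuation of $n$, passing to quotients by normal $2$-subgroups of $\Aut(X)$. The expectation is that for types in (i) the existence of an odd automorphism is controlled by an epimorphism $\Aut(X)\to\ZZ_2$ that disappears exactly when $n$ is even, while for types in (ii) and (iv) the inductive step preserves evenness of all $2$-elements. The consistency of these reductions with the $n$ odd base case then yields the stated answer.

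The hard part will be case (iii), type $\{1,2^1\}$: one must decide precisely when $X$ is realized as a $(2t+1)$-Cayley graph over a cyclic group $\ZZ_{2^k}$. This amounts to identifying a cyclic semiregular $2$-subgroup of $\Aut(X)$ of order $2^k$ whose $2t+1$ orbits are permuted among themselves by the $1$-arc-regular complement available from the type, and then showing that the existence of such a subgroup is equivalent to the existence of an odd $2$-element. I plan to tackle this by exploiting the two distinct transitive actions guaranteed by the type, analyzing the block system induced by the Sylow $2$-subgroup, and appealing to the Foster census as the base of an induction on $k$; this is where I expect the structural rather than merely arithmetic nature of the even/odd dichotomy to become visible.
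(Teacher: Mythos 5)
Your high-level plan (a type-by-type analysis driven by the parity of $(2n-|\Fix(\sigma)|)/2$ for involutions $\sigma$) matches the paper's strategy, but the proposal leaves unproved exactly the statements that carry all the difficulty, and two of your proposed mechanisms would not work. First, your claims about fixed-point counts ("a stabilizer involution whose fixed-point count has the parity opposite to that of $n$", "every involution in $\Aut(X)_v$ has a fixed-point count whose parity matches that of $n$") are assertions, not arguments: what one actually needs is $|\Fix(\sigma)|\bmod 4$, and this is governed by which induced subgraphs ($I$-, $Y$-, $H$-, $A$-trees) can occur as connected components of $X[\Fix(\sigma)]$. The paper devotes all of Section~\ref{sec:cells} to classifying these ``rigid cells'' for each $s$ (e.g.\ for $3$-regular graphs with a $2$-regular subgroup the canonical involution has only $Y$-tree cells, so $|\Fix(\sigma)|\equiv 0\pmod 4$ and $\sigma$ is odd iff $n$ is odd; for $5$-regular graphs one further needs a nontrivial argument that the number of edges flipped by the canonical involution is even, via divisibility by $6$ of orbit lengths of $\Aut(X)_v$ on those edges). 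Without this classification your parity claims are unsupported, and for types $\{2^2\}$ and $\{4^2\}$ there is no edge-flipping involution at all, so one must analyze the order-$4$ generator $a$ directly; the paper does this by combining the girth bound of Proposition~\ref{pro:girth} with a case analysis of the quotient graph $X_a$, something your outline does not anticipate.

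Second, your reduction for $n$ even by ``induction on the $2$-valuation of $n$, passing to quotients by normal $2$-subgroups of $\Aut(X)$'' fails at the start: the automorphism group of a cubic symmetric graph need not have any nontrivial normal $2$-subgroup (it can be almost simple), so there is in general nothing to quotient by, and no epimorphism $\Aut(X)\to\ZZ_2$ is available to ``control'' oddness. Likewise, for case (iii) the Foster census is a finite list and cannot serve as the base of an induction over all $k$; the paper instead invokes Janko's classification of $2$-groups with a small centralizer of an involution (Proposition~\ref{pro:Janko}) to force a Sylow $2$-subgroup of the $2$-regular group to be dihedral or quasi-dihedral, whence a semiregular cyclic subgroup of order $2^k$ and the $(2t+1)$-Cayley structure. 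These are not cosmetic omissions: the rigid-cell machinery, the girth-plus-quotient-graph counting for $\{2^2\}$ and $\{4^2\}$, and Janko's theorem are the three pillars of the actual proof, and none of them is recoverable from the steps you describe.
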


%\vspace*{-0.6cm}
%{\footnotesize
\begin{table}[h!]
\caption{\label{tab:odd}  
Existence of odd automorphisms in a cubic symmetric graph $X$ of order $2n$.}
$$
\begin{array}{ccc}
\hline
%&&\\
\textrm{Type} & \textrm{Odd automorphisms exist if and only if} & \textrm{Comments} \\
 		%	     & \textrm{} &  \\
\hline\hline
%&&\\ 
\{1\}		& n \textrm{ odd } & \textrm{ Prop.~\ref{pro:cubic-1-regular} }\\
%&&\\
\{1,2^1\}   & n \textrm{ odd, or } n=2^{k-1}(2t+1) \textrm{ and $X$ is a (2t+1)-Cayley  }& \textrm{ Prop.~\ref{pro:cubic-2-regular-2mod4} and~\ref{pro:cubic-2-regular-0mod4}}\\
                 &  \textrm{graph on a cyclic group of order $2^k$, where $k\ge 2$}& \\
%&&\\ 
\{2^1\} & n \textrm{ odd and $X$   bipartite } & \textrm{ Prop.~\ref{pro:cubic-s-regular-without-s-1} }\\
%&&\\
\{2^2\} &    \textrm{ never } & \textrm{ Prop.~\ref{pro:cubic-2-2} }\\
%&&\\
\{1,2^1,2^2,3\} & n   \textrm{ odd } & \textrm{ Prop.~\ref{pro:3-2-odd} }\\
%&&\\
\{2^1,2^2,3\} & n   \textrm{ odd } & \textrm{ Prop.~\ref{pro:3-2-odd} }\\
%&&\\
\{2^1,3\} & n   \textrm{ odd } & \textrm{ Prop.~\ref{pro:3-2-odd} }\\
%&&\\
\{2^2,3\} & n   \textrm{ odd } & \textrm{ Prop.~\ref{pro:3-2-odd} }\\
%&&\\
\{3\} & n   \textrm{ odd and $X$   bipartite } & \textrm{ Prop.~\ref{pro:cubic-s-regular-without-s-1} }\\
%&&\\
\{1,4^1\} & n   \textrm{ odd   } & \textrm{ Prop.~\ref{pro:cubic-s-regular-without-s-1} }\\
%&&\\
\{4^1\} & n   \textrm{ odd and $X$   bipartite } & \textrm{ Prop.~\ref{pro:cubic-s-regular-without-s-1} }\\
%&&\\
\{4^2\} &  n  \textrm{ odd } & \textrm{ Prop.~\ref{pro:cubic-4-2x} }\\
%&&\\
\{1,4^1,4^2,5\} & n    \textrm{ odd } & \textrm{ Prop.~\ref{pro:5-4-oddx} }\\
%&&\\
\{4^1,4^2,5\} & n    \textrm{ odd } & \textrm{ Prop.~\ref{pro:5-4-oddx} }\\
%&&\\
\{4^1,5\} &      \textrm{ never } & \textrm{ Prop.~\ref{pro:5-4-oddx} }\\
%&&\\
\{4^2,5\} &      \textrm{ never } & \textrm{ Prop.~\ref{pro:5-4-oddx} }\\
%&&\\
\{5\} & n   \textrm{ odd and $X$   bipartite } & \textrm{ Prop.~\ref{pro:cubic-s-regular-without-s-1} }\\
%&&\\
\hline\hline
\end{array}
$$
\end{table}
%}

As a direct consequence of Theorem~\ref{thm:main}
we have the following corollary about transitive groups
with a self-paired suborbit of length $3$. 

\begin{corollary}
\label{cor:groups}
Let $H$ be a transitive group with a suborbit of length $3$
giving rise to an orbital graph satisfying conditions for the existence
of odd automorphisms from Table~\ref{tab:odd}. Then $H$ is orbital-odd.
\end{corollary}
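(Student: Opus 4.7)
The plan is to show that this corollary follows almost immediately from Theorem~\ref{thm:main} once the setup is translated from the group-theoretic language of orbitals to the graph-theoretic language of cubic symmetric graphs. First I would fix notation: let $\Delta$ be the self-paired suborbit of length $3$ (the self-pairing is implicit in the statement, since otherwise the corresponding orbital graph would be a digraph and Table~\ref{tab:odd} would not apply), and let $\mathcal{O}$ be the associated orbital, so that $X = X(H,\mathcal{O})$ is the basic orbital graph in question.

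Next I would verify the structural properties needed to invoke Theorem~\ref{thm:main}. Since $\Delta$ has length $3$, the graph $X$ is $3$-valent; since $\mathcal{O}$ is a single $H$-orbit on $V\times V$ and $H$ is vertex-transitive, $H$ acts transitively on the arcs of $X$. Hence $X$ is a cubic $H$-arc-transitive graph, and in particular $\Aut(X)$ contains the arc-transitive subgroup $H$, so $X$ is a cubic symmetric graph in the standard sense. By hypothesis $X$ satisfies one of the conditions of Table~\ref{tab:odd}, so Theorem~\ref{thm:main} guarantees the existence of an automorphism $\sigma\in\Aut(X)$ that acts as an odd permutation on $V(X)=V$.

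The final step is to conclude that $H$ is orbital-odd. By construction, $X$ is a basic orbital graph of $H$, and $\sigma$ is an odd automorphism of $X$; thus the answer to Problem~\ref{problem-group} for $H$ is positive, which is precisely the definition of $H$ being orbital-odd.

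Since each step is a direct translation or an appeal to Theorem~\ref{thm:main}, there is no genuine obstacle in the proof. The only point that requires a brief comment is the implicit self-pairing assumption on the suborbit of length $3$ (needed so that $X$ is an undirected graph and Table~\ref{tab:odd} is applicable); this can be stated explicitly at the start of the argument to avoid ambiguity.
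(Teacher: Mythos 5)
Your proposal is correct and matches the paper's approach exactly: the paper gives no separate proof, presenting the corollary as a direct consequence of Theorem~\ref{thm:main}, and your write-up simply makes the routine translation explicit (self-paired suborbit of length $3$ $\Rightarrow$ cubic $H$-arc-transitive basic orbital graph $\Rightarrow$ odd automorphism by Table~\ref{tab:odd} $\Rightarrow$ $H$ orbital-odd by definition). Your remark about the implicit self-pairing is apt, since the paper itself only mentions it in the sentence introducing the corollary rather than in its statement.
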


The paper is organized as follows. 
In Section~\ref{sec:prelim} notation and terminology is introduced and
certain results on cubic symmetric graphs, essential to the strategy of the proof of Theorem~\ref{thm:main}, are gathered. In Section~\ref{sec:cells}
the concept of rigid subgraphs and rigid cells are introduced which
will prove useful in the study of the even/odd question
for cubic symmetric graphs. In Section~\ref{sec:main} the proof of
Theorem~\ref{thm:main} is given following  a series of propositions 
in which the $17$ particular types of cubic symmetric graphs are considered.
 
%%%%%%%%%%%%%%%%%%%%%%%%%%%%%%%%%%%%%%%%%%%%%%%%%%%%%%
%%%%%%%%%%%%%%%%%%%%%%%%%%%%%%%%%%%%%%%%%%%%%%%%%%%%%%
%%%%%%%%%%%%%%%%%%%%%%%%%%%%%%%%%%%%%%%%%%%%%%%%%%%%%%
\section{Preliminaries}
\label{sec:prelim}
\noindent
%%%%%%%%%%%%%%%%%%%%%%%%%%%%%%%%%%%%%%%%%%%%%%%%%%%%%%
%%%%%%%%%%%%%%%%%%%%%%%%%%%%%%%%%%%%%%%%%%%%%%%%%%%%%%
%%%%%%%%%%%%%%%%%%%%%%%%%%%%%%%%%%%%%%%%%%%%%%%%%%%%%%

%%%%%%%%%%%%%%%%%%%%%%%%%%%%%%%%%%%%%%%%%%%%%%%%%%%%%%
%%%%%%%%%%%%%%%%%%%%%%%%%%%%%%%%%%%%%%%%%%%%%%%%%%%%%%
%%%%%%%%%%%%%%%%%%%%%%%%%%%%%%%%%%%%%%%%%%%%%%%%%%%%%%
\subsection{Notation}
\noindent
%%%%%%%%%%%%%%%%%%%%%%%%%%%%%%%%%%%%%%%%%%%%%%%%%%%%%%
%%%%%%%%%%%%%%%%%%%%%%%%%%%%%%%%%%%%%%%%%%%%%%%%%%%%%%
%%%%%%%%%%%%%%%%%%%%%%%%%%%%%%%%%%%%%%%%%%%%%%%%%%%%%%

Throughout this paper graphs are finite, simple, undirected
and connected, unless specified otherwise.
Given a graph $X$ we let $V(X)$, $E(X)$, $A(X)$ and 
$\Aut (X)$ be the vertex set, the edge set, 
the arc set and the automorphism group of $X$, 
respectively. For adjacent vertices $u$ and $v$ in $X$, we  
denote the corresponding edge by $uv$. If $u\in V(X)$ then  $N(u)$ denotes the set of 
neighbors of $u$ and $N^i(u)$ denotes the set of vertices at distance  $i>1$  from $u$.
A graph $X$ is said to be {\em cubic} if $|N(u)|=3$ for every vertex $u\in V(X)$.
A sequence $(u_0,u_1,u_2,\ldots, u_s)$ of distinct vertices in a graph is called an 
{\em $s$-arc} if $u_i$ is adjacent to $u_{i+1}$ for every $i\in \{0,1,\ldots,s-1\}$.
For $S\subseteq V(X)$ we let $X[S]$ denote  the induced subgraph of $X$ on  $S$.
For a partition $\cal W$ of $V(X)$, we let $X_{\cal W}$ 
be the associated {\em quotient graph} of
$X$ {\em relative to} $\cal W$, that is, the graph 
with vertex set $\cal W$ and edge set induced 
by the edge set $E(X)$ in a natural way. When $\cal W$ consists of orbits of a subgroup 
$H$ of $\Aut(X)$ we denote  $X_{\cal W}$ by $X_H$, and by
$X_a$ when $H=\la a\ra$ is a cyclic group generated by an automorphism $a$.

%By an $n$-cycle we shall always mean a cycle with $n$ vertices. 
%We will use the symbol $\ZZ_n$ for both, the cyclic group of order $n$
%and the ring of integers modulo $n$.  

A subgroup $G \leq \Aut(X)$ is said to be {\em vertex-transitive}, {\em edge-transitive}
and {\em arc-transitive} provided it acts transitively on the sets  
of vertices, edges and arcs 
of $X$, respectively. A graph is said to be {\em vertex-transitive}, {\em edge-transitive},
and {\em arc-transitive} if its automorphism group is vertex-transitive, 
edge-transitive and 
arc-transitive, respectively. An arc-transitive graph is also called {\em symmetric}.
A subgroup $G\leq \Aut X$ is said to be {\em $s$-regular} if it acts transitively 
on the set of $s$-arcs and the stabilizer of an  $s$-arc in $G$ is trivial. 

A {\em quasi-dihedral group} $QD_{2^n}$ (sometimes 
also called a {\em semi-dihedral group})
is a non-abelian group of order $2^n$ with a presentation
$
\la r,s\mid r^{2^{n-1}}=s^2=1, srs=r^{2^{n-2}-1}\ra.
$

To end  this subsection let us recall that if
a transitive permutation group $G$ on a set $V$
contains an odd permutation then the intersection $G\cap \textrm{Alt}(V)$
of $G$ with the alternating group ${Alt}(V)$ on $V$
is its index $2$ subgroup.
This will be used in several places in subsequent sections.
 
%%%%%%%%%%%%%%%%%%%%%%%%%%%%%%%%%%%%%%%%%%%%%%%%%%%%%%
%%%%%%%%%%%%%%%%%%%%%%%%%%%%%%%%%%%%%%%%%%%%%%%%%%%%%%
%%%%%%%%%%%%%%%%%%%%%%%%%%%%%%%%%%%%%%%%%%%%%%%%%%%%%%
\subsection{Cubic symmetric graphs}
\label{sec:cubic-sym}
\indent
%%%%%%%%%%%%%%%%%%%%%%%%%%%%%%%%%%%%%%%%%%%%%%%%%%%%%%
%%%%%%%%%%%%%%%%%%%%%%%%%%%%%%%%%%%%%%%%%%%%%%%%%%%%%%
%%%%%%%%%%%%%%%%%%%%%%%%%%%%%%%%%%%%%%%%%%%%%%%%%%%%%%

In \cite{T47} Tutte proved that every finite cubic symmetric  graph is $s$-regular 
for some $s \le 5$. A further deeper insight into the structure cubic symmetric graphs is due to Djokovi\'c and Miller \cite{DM80} who proved 
that a vertex  stabilizer in an $s$-regular  subgroup of automorphisms of a  
cubic symmetric graph is isomorphic to $\ZZ_3$, $S_3$, $S_3\times\ZZ_2$, $S_4$, 
or $S_4\times \ZZ_2$ depending on whether $s=1,2,3,4$ or $5$, respectively. Consequently, the automorphism group
of a cubic symmetric graph of order $n$ is of order $3\cdot 2^{s-1}  n$.
Djokovi\'c and Miller \cite{DM80} also proved that for $s\in\{1,3,5\}$ there is just one possibility for edge stabilizers, while there exists two possibilities for $s\in\{2,4\}$,
see Table~\ref{tab:vertex-and-edge-stabilizers}. 
In particular, for $s=2$ the edge stabilizer is either isomorphic to $\ZZ_2\times\ZZ_2$
or $\ZZ_4$, and for $s=4$ the edge stabilizer is either isomorphic to the dihedral group 
 $D_{16}$ of order $16$ or to the quasi-dihedral group $QD_{16}$ of order $16$.

\begin{table}[h!]
\caption{\label{tab:vertex-and-edge-stabilizers}  The list of all possible pairs of vertex and edge stabilizers in cubic $s$-regular graphs. }
$$
\begin{array}{ccc}
\hline 
%&&\\
s &   \Aut(X)_v & \Aut(X)_{e} \\%[4pt]
\hline	\hline 
%&&\\ 
1 & \ZZ_3 & id \\%[4pt]
2 & S_3 & \ZZ_2^2 \textrm{ or } \ZZ_4 \\%[4pt]
3 & S_3 \times \ZZ_2 & D_8 \\%[4pt]
4 & S_4& D_{16} \textrm{ or } QD_{16} \\%[4pt]
5 & S_4 \times \ZZ_2 & (D_8\times\ZZ_2)\rtimes\ZZ_2 \\%[4pt]
\hline 
\end{array}
$$
\end{table}

The automorphism group of any finite symmetric cubic graph is an epimorphic image of one of the following seven groups:{\small
\begin{eqnarray} 
\label{G1}
G_1&=&\la h,a \mid h^3=a^2=1\ra,\\%[8pt]
\label{G21}
G_2^1&=&\la h,a, p \mid h^3=a^2=p^2=1, apa=p, php=h^{-1}\ra,\\%[8pt]
\label{G22}
G_2^2&=&\la h,a, p \mid h^3=p^2=1, a^2=p, php=h^{-1}\ra,\\%[8pt]
\label{G3}
G_3&=&\la h,a, p,q \mid h^3=a^2=p^2=q^2=1, apa=q, qp=pq, 
ph=hp, qhq=h^{-1}\ra,\\%[8pt]
\label{G41}
G_4^1&=&\la h,a, p,q,r \mid h^3=a^2=p^2=q^2=r^2=1, apa=p, aqa=r, h^{-1}ph=q, \\%[8pt]
\nonumber
&&  h^{-1}qh=pq, rhr=h^{-1}, pq=qp, pr=rp, rq=pqr\ra,\\%[8pt]
\label{G42}
G_4^2&=&\la h,a, p,q,r \mid h^3=p^2=q^2=r^2=1, a^2=p, a^{-1}qa=r, h^{-1}ph=q,    \\%[8pt]
\nonumber
&&  h^{-1}qh=pq, rhr=h^{-1}, pq=qp, pr=rp, rq=pqr\ra,\\%[8pt]
\label{G5}
G_5&=&\la h,a, p,q,r,s \mid h^3=a^2=p^2=q^2=r^2=s^2=1,  apa=q, ara=s,    \\%[8pt]
\nonumber
&& h^{-1}ph=p,  h^{-1}qh=r, h^{-1}rh=pqr, shs=h^{-1},  pq=qp, pr=rp, ps=sp,  \\%[8pt]
\nonumber 
&&qr=rq, qs=sq, sr=pqrs\ra.
\end{eqnarray}
}
This implies that an arc-transitive subgroup  of a cubic symmetric graph   is a quotient of one of these seven
groups by some normal torsion-free subgroup. In particular, an $s$-regular subgroup of automorphisms of 
a cubic symmetric graph is a quotient group of a group $G_s$, if $s\in\{1,3,5\}$, and of a group $G_s^i$, $i\in\{1,2\}$,
if $s\in \{2,4\}$.  
Moreover, in \cite{CN09}  a complete characterization of 
admissible types of cubic symmetric graphs according to the structure of arc-transitive subgroups is given.
For example, a cubic symmetric graph $X$ is said to be of type $\{1,2^1,2^2,3\}$ if its automorphism group is $3$-regular,
admitting two $2$-regular subgroups, of which one is a quotient of the group $G_2^1$ and one of $G_2^2$, and admitting also a $1$-regular subgroup.
All possible types are summarized in 
Table~\ref{tab:cubic-types} (for details see \cite{CN09}). 

\begin{table}[h!]
\caption{\label{tab:cubic-types}   All possible types of cubic symmetric graphs.}
$$
\begin{array}{ccc|ccc|ccc}
\hline
s&\textrm{Type} &\textrm{Bipartite?}& s&\textrm{Type} &\textrm{Bipartite?}& s&\textrm{Type}&\textrm{Bipartite?}\\
\hline \hline
1& \{1\} 	&\textrm{Sometimes}	&			3 & \{2^1,3\}	&\textrm{Never}	& 		5 & \{1,4^1,4^2,5\} &\textrm{Always}\\
2 & \{1,2^1\} &\textrm{Sometimes}	&			3 & \{2^2,3\} 	&\textrm{Never}	& 		5 & \{4^1,4^2,5\} &\textrm{Always}\\
2 & \{2^1\}		&\textrm{Sometimes}&			3 & \{3\} 		&\textrm{Sometimes}	& 		5 & \{4^1,5\} &\textrm{Never}\\
2 & \{2^2\}	&\textrm{Sometimes}	&			4 & \{1,4^1\}	&\textrm{Always}	& 		5 & \{4^2,5\}&\textrm{Never}\\
3 & \{1,2^1,2^2,3\}	&\textrm{Always}	& 	4 & \{4^1\} 		&\textrm{Sometimes}	& 		5 & \{5\} &\textrm{Sometimes}\\
3 & \{2^1,2^2,3\}	&\textrm{Always}	& 	4 & \{4^2\}		&\textrm{Sometimes}	& 			& &\\		
\hline
\end{array}
$$
\end{table}

Finally, in Section~\ref{sec:main} the following result about the girth of cubic
symmetric graphs of types $\{2^2\}$ and $\{4^2\}$, extracted from 
\cite[Theorems~2.1 -- 2.2]{CN07} will be needed.
(The {\em girth} of a graph is the length of a shortest cycle contained in the graph.)

\begin{proposition}
\label{pro:girth}
A cubic symmetric graph, which is either of type $\{2^2\}$ or $\{4^2\}$, has
 girth greater than $9$.
\end{proposition}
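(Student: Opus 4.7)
The plan is to derive a contradiction by assuming that a cubic symmetric graph $X$ of type $\{2^2\}$ or $\{4^2\}$ contains a cycle of length at most $9$, and then showing that this forces additional automorphisms incompatible with the assumed pure type. The starting point is the precise algebraic structure of the edge stabilizer: for type $\{2^2\}$ it is the cyclic group $\langle a\rangle \cong \mathbb{Z}_4$ with $a^2 = p$, where $p$ is the nontrivial central involution in the vertex stabilizer $S_3 = \langle h,p\rangle$; for type $\{4^2\}$ it is the quasi-dihedral group $QD_{16}$ of order $16$. In each case there is a distinguished element of even order that swaps the endpoints of a fixed edge but acts as a single cycle on a canonical set of nearby vertices.

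For type $\{2^2\}$, the explicit action of $a$ near an edge $uv$ with $N(u) = \{v,x_1,x_2\}$ and $N(v) = \{u,y_1,y_2\}$ must be the ``twisted'' $4$-cycle $(x_1,y_1,x_2,y_2)$ on the outer neighbors (so that $a^2 = p$ fixes $u,v$ and simultaneously swaps $x_1 \leftrightarrow x_2$ and $y_1 \leftrightarrow y_2$). This twist is the obstruction I would exploit: assuming a short cycle $C$ through $uv$, the images $a(C), a^2(C), a^3(C)$ together with the images under $\langle h\rangle$ at each vertex produce a collection of short cycles whose combined intersection pattern forces a new edge-fixing involution (the ``untwisted'' $\mathbb{Z}_2 \times \mathbb{Z}_2$ behavior), thereby enlarging the edge stabilizer and pushing the graph into a richer type such as $\{1,2^1,2^2,3\}$, $\{2^1,2^2,3\}$ or $\{2^2,3\}$, contradicting purity of $\{2^2\}$. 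The concrete implementation is a finite case analysis: for each $g \in \{3,4,5,6,7,8,9\}$, enumerate the possible cyclic orderings of a $g$-cycle through $uv$, apply the defining relations of $G_2^2$ to compute the images under $a$ and $h$, and detect the forced coincidence that creates the extra symmetry.

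For type $\{4^2\}$ the argument follows the same outline but with the edge stabilizer replaced by $QD_{16}$. I would single out an order-$8$ element $b \in QD_{16}$ which plays the role of the twisted edge-swap, and use the $4$-arc-regular action to transport an assumed short cycle; the key additional input is that a $4$-arc-regular action is rich enough to detect any pair of $g$-cycles intersecting in a common $4$-arc, so if $g \leq 9$ the rotations of $C$ by powers of $b$ together with the vertex stabilizer $S_4$ at an endpoint cannot coexist without generating an involution outside $QD_{16}$, again violating the purity of the type.

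The main obstacle I anticipate is the case analysis at girths $6$, $8$ and $9$, where multiple combinatorially distinct short cycles can pass through a fixed short arc and coexist with the $a$- or $b$-rotation without immediately creating a visible contradiction; here one must use the defining relations carefully, together with the constraint that a short cycle carried to itself by a dihedral or quasi-dihedral subgroup forces divisibility conditions on its length that are incompatible with the stated girth bounds. Once these cases are eliminated, the argument uniformly yields girth greater than $9$ for both types.
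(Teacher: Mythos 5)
First, a point of comparison: the paper does not prove this proposition at all. It is stated as ``extracted from \cite[Theorems~2.1--2.2]{CN07}'', i.e.\ from Conder and Nedela's classification of symmetric cubic graphs of girth at most $9$, so there is no internal argument to measure your proposal against; you are attempting to supply a proof that the authors deliberately delegate to a substantial external classification theorem.

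As a proof, your proposal has a genuine gap: its entire content is the assertion that for every $g\in\{3,\dots,9\}$ and every configuration of a $g$-cycle through the edge $uv$, transporting the cycle by $a$ (resp.\ by an order-$8$ element of $QD_{16}$) and by the vertex stabilizers forces an ``untwisted'' edge-reversing involution and hence a larger type. That assertion \emph{is} the proposition, and it is nowhere verified --- you explicitly defer the cases $g=6,8,9$ as ``the main obstacle,'' and the auxiliary claim that a girth cycle stabilized by a dihedral or quasi-dihedral subgroup satisfies incompatible divisibility conditions is neither proved nor even clearly applicable (nothing guarantees a girth cycle is preserved by such a subgroup). The known route to this fact, in \cite{CN07}, is a full enumeration of the symmetric cubic graphs of girth at most $9$, obtained by analysing quotients of the groups $G_s$ and $G_s^i$ by normal subgroups containing a girth-cycle relator --- a computation-assisted classification --- followed by a type check of every graph on the resulting list; nothing in your outline substitutes for that enumeration. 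There is also a factual slip in your setup: in $G_2^2$ the vertex stabilizer $\langle h,p\rangle\cong S_3$ has trivial center, so $p=a^2$ is not ``the nontrivial central involution'' of the vertex stabilizer (it inverts $h$); what is true, and what you actually use, is only that $a^2$ fixes both endpoints of the edge and swaps the two remaining neighbours at each endpoint. In short, the strategy is plausible as a plan of attack, but the proof is not there: the decisive case analysis is missing, and the evidence from the literature is that it cannot be closed by a short local argument of the kind you sketch.
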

%%%%%%%%%%%%%%%%%%%%%%%%%%%%%%%%%%%%%%%%%%%%%%%%%%%%%%
%%%%%%%%%%%%%%%%%%%%%%%%%%%%%%%%%%%%%%%%%%%%%%%%%%%%%%
%%%%%%%%%%%%%%%%%%%%%%%%%%%%%%%%%%%%%%%%%%%%%%%%%%%%%%
\section{Rigid cells}
\label{sec:cells}
\noindent
%%%%%%%%%%%%%%%%%%%%%%%%%%%%%%%%%%%%%%%%%%%%%%%%%%%%%%
%%%%%%%%%%%%%%%%%%%%%%%%%%%%%%%%%%%%%%%%%%%%%%%%%%%%%%
%%%%%%%%%%%%%%%%%%%%%%%%%%%%%%%%%%%%%%%%%%%%%%%%%%%%%%

Given a graph $X$ and an automorphism $\alpha$ of $X$
let $\textrm{Fix}(\alpha)$ denote the set of all vertices of $X$ fixed by $\alpha$.
With the assumption that $\textrm{Fix}(\alpha)\ne \emptyset$ we call
the subgraph $X[\textrm{Fix}(\alpha)]$ 
induced on $\textrm{Fix}(\alpha)$ the {\em rigid subgraph of } $\alpha$
or, in short, the $\alpha$-{\em rigid subgraph}. Every component of 
$X[\textrm{Fix}(\alpha)]$ is referred to as an {\em $\alpha$-rigid cell}.

This concept will prove useful in the study of the ``even/odd question''
for cubic symmetric graphs.
For this purpose let us fix some notation and terminology.
We will use the terms $I$-{\em tree}, $H$-{\em tree}, $Y$-{\em tree}, $A$-{\em tree}
and $B$-{\em tree} for the graphs given in Figure~\ref{fig:trees} below.
More precisely, we will denote these graphs by $I(u,v)$, $H(u,v)$, $Y(v)$,
$A(v)$ and $B(v)$, respectively. 

Let $X$ be a cubic graph of girth greater than or equal to $5$. 
For a vertex $v$ in $V(X)$ let $N(v)=\{v_1,v_2,v_3\}$ be the set of neighbors of $v$, 
and let $N^2(v)=\{v_{11}, v_{12}\} \cup \{v_{21},v_{22}\} \cup \{v_{31}, v_{32}\}$ be the second neighborhood of $v$, where $N(v_i)=\{v,v_{i1},v_{i2}\}$, $i\in \{1,2,3\}$. 
Similarly, for an edge $uv$ of $X$
let $v_1$ and $v_2$ be the remaining two neighbors of $v$, and 
let $u_1$ and $u_2$ be the remaining two neighbors of $u$.
For a vertex $v$ of $X$ we therefore have $Y(v)=X[\{v\}\cup N(v)]$,  
$A(v)=X[\{v\}\cup N(v) \cup N^2(v)]$ and 
$B(v)_i=X[\{v\}\cup N(v) \cup N^2(v)\setminus \{v_{i1},v_{i2}\}]$, where $i\in \{1,2,3\}$.
For short, we will use the notation $B(v)$ for any of the trees $B(v)_1$,
$B(v)_2$ and $B(v)_3$ (and call it a $B$-tree).
Similarly, for two adjacent vertices $u$ and $v$  of $X$ we have
$I(u,v)=X[\{u,v\}]$ and $H(u,v)=X[N(u)\cup N(v)]$.  
 Note that $H(u,v)=Y(u)\cup Y(v)$. 

\begin{figure}[h!]
\begin{center}
\includegraphics[width=90mm]{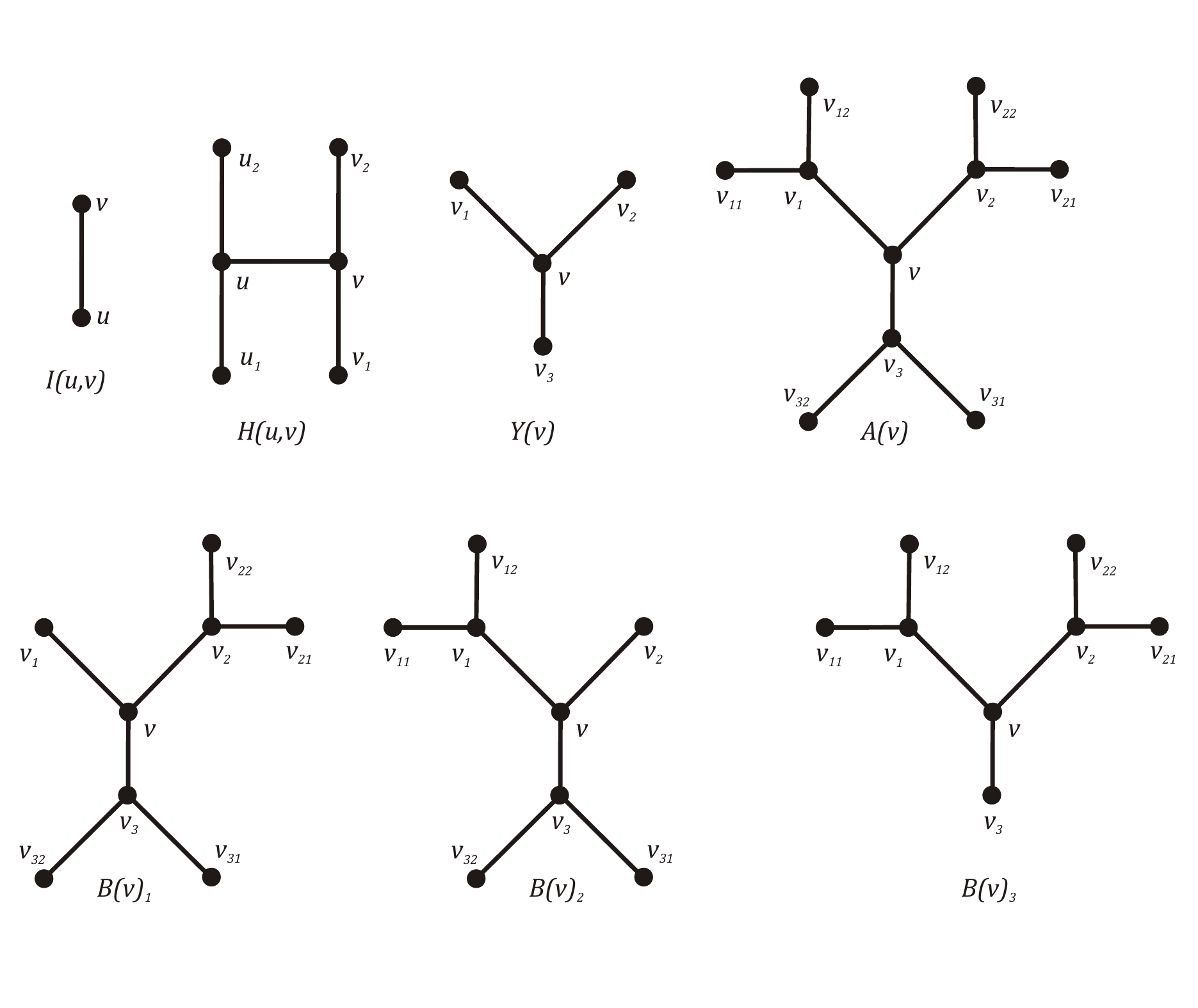}
\caption{\label{fig:trees} {\footnotesize $I$-tree, $H$-tree, $Y$-tree, $A$-tree
and three $B$-trees. }}
\end{center}
\end{figure}

We will now restrict ourselves to cubic symmetric graphs.
The structure of vertex stabilizers in cubic symmetric graphs
implies that only automorphisms of order $2$, $3$,  $4$ and $6$
can fix a vertex (see Section~\ref{sec:cubic-sym}). 
In the propositions below all possible rigid cells for such 
automorphisms are listed. 
(The exclusion of $Y$-trees as rigid cells for automorphisms
of order $4$ is proved in Proposition~\ref{pro:cell3} 
and uses Proposition~\ref{pro:cell25}.)
For convenience we let $\mathcal{I}(X)\subseteq \Aut(X)$ 
denote the set of all involutions of a cubic symmetric graph $X$
which fix some vertex of $X$, and we let
$\mathcal{S}(X)\subseteq \Aut(X)$ denote the set of all semiregular involutions of $X$. 

%{\color{red} ADD ELEMENTS OF ORDER 6. THEY CAN ALSO ME IN VERTEX STABILIZERS!!!}

\begin{proposition}
\label{pro:cell1}
Let $X$ be a cubic symmetric graph and let $\alpha\in\Aut(X)$ be an automorphism of $X$
fixing a vertex. 
\begin{enumerate}[(i)]
\itemsep=0pt
\item If $\alpha$ is of order $3$ or $6$ then the only $\alpha$-rigid cells are isolated vertices.
\item If $\alpha$ is of order $4$ then the only possible $\alpha$-rigid cells are 
$I$-trees and $Y$-trees.
\end{enumerate}
\end{proposition}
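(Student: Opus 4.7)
The plan is to exploit the structure of the vertex stabilizer $\Aut(X)_v$, as tabulated in Table~\ref{tab:vertex-and-edge-stabilizers}, together with its induced action on the three-element neighborhood $N(v)$. Since $X$ is $s$-regular for some $s\in\{1,\ldots,5\}$, the stabilizer is one of $\ZZ_3$, $S_3$, $S_3\times\ZZ_2$, $S_4$, $S_4\times\ZZ_2$, and the natural transitive action on $N(v)$ has image $\ZZ_3$ when $s=1$ and $S_3$ when $s\ge 2$ (the latter because $s\ge 2$ forces $2$-transitivity on $N(v)$). Writing $K_v$ for the kernel of this action, a direct calculation of indices yields $K_v=1$ for $s\le 2$, $K_v\cong\ZZ_2$ for $s=3$, $K_v\cong V_4$ for $s=4$ and $K_v\cong V_4\times\ZZ_2\cong\ZZ_2^{3}$ for $s=5$; in every case $K_v$ is a (possibly trivial) elementary abelian $2$-group.

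For part (i), first suppose $\alpha$ has order $3$ and fixes $v$. Since $K_v$ is a $2$-group, $\alpha\notin K_v$, so its image in the action on $N(v)$ is an element of order $3$ in $\ZZ_3$ or $S_3$, which must be a $3$-cycle on $N(v)$. Hence $\alpha$ permutes the three neighbors cyclically and fixes none, so $v$ is an isolated vertex in $X[\Fix(\alpha)]$. If instead $\alpha$ has order $6$, then $\alpha^{2}$ has order $3$ and also fixes $v$; the previous case shows $\alpha^{2}$ fixes no neighbor of $v$, and since $\Fix(\alpha)\subseteq\Fix(\alpha^{2})$ the same holds for $\alpha$, giving isolated vertices again.

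For part (ii), order-$4$ elements appear only inside $S_4$ and $S_4\times\ZZ_2$, so $s\in\{4,5\}$. The image of $\alpha$ on $N(v)$ lies in $S_3$ and has order dividing $4$, hence is $1$ or $2$. Order $1$ would force $\alpha\in K_v$, contradicting that $K_v$ has exponent $2$. Therefore the image of $\alpha$ is a transposition, and $\alpha$ fixes exactly one neighbor of $v$. Since this holds at every vertex of $\Fix(\alpha)$, each such vertex has degree $1$ in the rigid subgraph, so each connected component is an edge, that is, an $I$-tree.

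The main subtlety is the identification of $K_v$ for $s\in\{4,5\}$: one must recognise it as the unique normal subgroup of the stabilizer of index $6$ (namely $V_4$ inside $S_4$ and $V_4\times\ZZ_2$ inside $S_4\times\ZZ_2$), both of exponent $2$. This structural fact is what forces order-$3$ and order-$4$ elements to act non-trivially on $N(v)$ and drives both parts of the proposition. The kernel argument above already rules out $Y$-trees for part (ii); the parenthetical remark in the statement signals that a less structural version of the proof (one that merely argues each fixed vertex has degree $1$ or $3$ in the rigid subgraph, via the non-existence of an order-$2$ image of an order-$4$ element fixing only zero or two neighbors) would leave $Y$-trees as a formal possibility to be eliminated later in Proposition~\ref{pro:cell3}.
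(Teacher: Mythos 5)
Your proof is correct, but it takes a genuinely different route from the paper's, and for part (ii) it actually proves more than the stated proposition. The paper omits part (i) as straightforward (your local-action argument is a clean way to supply it: an element of order $3$ fixing $v$ cannot lie in the $2$-group kernel of the action on $N(v)$, so it induces a $3$-cycle there). For part (ii) the paper argues by elimination among the candidate cells: if an $H$-, $B$- or $A$-tree were a rigid cell of $\alpha$, then $\alpha^2\ne 1$ would fix a path of length greater than $4$, impossible since $s\le 5$; this leaves $I$-trees and $Y$-trees, and the exclusion of $Y$-trees is deliberately postponed to Proposition~\ref{pro:cell3}, whose proof in turn relies on the rather delicate analysis of involutions in Proposition~\ref{pro:cell25}. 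Your argument instead identifies the pointwise stabilizer of $\{v\}\cup N(v)$ as the unique index-$6$ normal subgroup of $\Aut(X)_v$ (namely $V_4$ in $S_4$ and $V_4\times\ZZ_2$ in $S_4\times\ZZ_2$), which has exponent $2$; hence an order-$4$ element must induce a transposition on $N(v)$, so every vertex of $\Fix(\alpha)$ has exactly one fixed neighbour and every rigid cell is an $I$-tree. This at once establishes the conclusion of Proposition~\ref{pro:cell3} and bypasses Proposition~\ref{pro:cell25} for that purpose. What the paper's route buys is uniformity, since the ``square of $\alpha$ fixes a long path'' device is the same one used throughout Section~\ref{sec:cells}; what yours buys is economy and a strictly stronger conclusion at this stage.
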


\begin{proof}
The proof of part (i) is straightforward and is omitted.
As for part (ii), if an $H$-tree, a $B$-tree or an $A$-tree was
a rigid cell of an automorphism of order $4$ then the square of
this automorphism (a non-identity element) would fix a 
path of length greater than $4$.
\end{proof}

%an automorphism of $X$ of
%order $3$ fixing a vertex

Given a cubic $s$-regular graph $X$, $s\ge 2$,
we call an involution $\sigma\in\Aut(X)$ {\em canonical
with respect to a pair of adjacent vertices} $u$ and $v$ of $X$ 
if it fixes the tree $I(u,v)$ for $s=2$ and if it fixes the tree $H(u,v)$ for $s=4$.
Similarly, we call it {\em canonical with respect to a vertex} $v$ of $X$ if
it fixes  the tree $Y(u)$ for $s=3$ and if it fixes the tree $A(v)$ for $s=5$.

The next four propositions give a full information on rigid cells of  involutions.
The proof of the first of these propositions is straightforward and is omitted.

\begin{proposition}
\label{pro:cell212}
Let $X$ be a cubic $s$-regular graph, $s\in\{1,2\}$ and let  
$\alpha\in\Aut(X)$ be an involution.
Then the following statements hold: 
\begin{enumerate}[(i)]
\itemsep=0pt
\item If $s=1$ then $\Fix(\alpha)=\emptyset$, that is, $\alpha$ is semiregular.
\item If $s=2$ then the only possible $\alpha$-rigid cells are $I$-trees.
%\item If $s=3$ then the possible $\alpha$-rigid cells are $I$-trees and $Y$-trees, with both types of cells possibly occurring simultaneously only when the graph $X$ is of type $\{3\}$.
%\item If $s=4$ then the only possible $\alpha$-rigid cells are $H$-trees. 
%\item If $s=5$ then the only possible $\alpha$-rigid 
%cells are $I$-trees, $H$-trees and $A$-trees. 
%\item If $s=5$ then the only possible $\alpha$-rigid cells are $H$-trees and $A$-trees,
%with both types of cells possibly occurring simultaneously only when the graph $X$ is of type $\{5\}$.
\end{enumerate}
\end{proposition}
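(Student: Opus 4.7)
The plan is to read off both statements directly from Table~\ref{tab:vertex-and-edge-stabilizers}, which records the structure of vertex and edge stabilizers in cubic $s$-regular graphs. For part (i), the vertex stabilizer in a $1$-regular action is $\ZZ_3$, which contains no involution. Hence no involution can fix a vertex, so $\Fix(\alpha)=\emptyset$ and $\alpha$ is semiregular, as claimed.

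For part (ii), suppose $\alpha$ fixes a vertex $v$. Since the vertex stabilizer is $S_3$ acting faithfully on the three-element set $N(v)$, the involution $\alpha$ must fix exactly one neighbor $u\in N(v)$ and swap the other two. In particular, the edge $uv$ lies in $X[\Fix(\alpha)]$, so no $\alpha$-rigid cell can be a single vertex. The next step is to rule out anything bigger than an edge. By $2$-regularity the stabilizer of a $2$-arc is trivial; equivalently, the stabilizer of the $1$-arc $(u,v)$ has order $2$, and any non-identity element of it (in particular $\alpha$) must act non-trivially on $N(u)\setminus\{v\}$ and on $N(v)\setminus\{u\}$. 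Consequently $\alpha$ swaps the two neighbors of $u$ distinct from $v$ and the two neighbors of $v$ distinct from $u$, so $u$ and $v$ have no further fixed neighbors.

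Combining these two observations, each connected component of $X[\Fix(\alpha)]$ is an edge, i.e.\ an $I$-tree. There is no real obstacle in this argument: once the vertex stabilizer is identified as $S_3$ and one invokes triviality of the $2$-arc stabilizer, both conclusions follow immediately, which is why the authors deem the proof straightforward.
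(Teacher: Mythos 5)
Your argument is correct: part (i) follows because the vertex stabilizer $\ZZ_3$ contains no involution, and for part (ii) the faithful $S_3$-action on $N(v)$ forces a fixed vertex to have exactly one fixed neighbour, while triviality of $2$-arc stabilizers prevents a second one, so every component of $X[\Fix(\alpha)]$ is a single edge. The paper omits this proof as straightforward, and your reasoning is exactly the standard argument intended (and the same style of ``fixing too long a path forces the identity'' reasoning the authors use for Propositions~\ref{pro:cell23}--\ref{pro:cell25}).
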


\begin{proposition}
\label{pro:cell23}
Let $X$ be a cubic $3$-regular graph   and let 
$\alpha\in \mathcal{I}(X)$ be a non-semiregular involution of $X$.
Then the possible $\alpha$-rigid cells are $I$-trees and $Y$-trees,
with both types of cells possibly occurring simultaneously only when the graph $X$
is of type $\{3\}$.
\end{proposition}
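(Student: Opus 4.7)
The plan is to establish the two assertions of the proposition separately.

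\textbf{Shape of the cells.} The argument is entirely local at a fixed vertex. Since $X$ is $3$-regular, $\Aut(X)_v\cong S_3\times\ZZ_2$, and its action on $N(v)$ factors through $S_3$ with kernel of order $2$; the nontrivial element of that kernel is the canonical involution $p_v$. An involution $\alpha$ fixing $v$ therefore acts on $N(v)$ either trivially (in which case $\alpha=p_v$) or as a transposition fixing exactly one neighbor. In the first case the component of $\Fix(\alpha)$ through $v$ contains $Y(v)$, and it cannot extend further, for if $\alpha$ were canonical at a neighbor $v_1$ as well, it would fix the $3$-arc $(v_2,v,v_1,v_{11})$ -- a genuine $3$-arc because cubic $3$-regular symmetric graphs are triangle-free -- contradicting the triviality of $3$-arc stabilizers. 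In the second case, $\alpha$ fixes the edge $vv_1$, and rerunning the dichotomy at $v_1$ yields either a $Y$-tree cell $Y(v_1)$ (if $\alpha$ is canonical at $v_1$) or the $I$-tree cell $I(v,v_1)$ (if not). Every rigid cell of $\alpha$ is therefore an $I$-tree or a $Y$-tree.

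\textbf{Both types force type $\{3\}$.} Suppose $\alpha$ has a $Y$-tree cell $Y(u)$ and a disjoint $I$-tree cell $I(x,y)$. By the local analysis, $\alpha=p_u$ while at both $x$ and $y$ the involution $\alpha$ is non-canonical; in particular $\alpha$ lies in the arc-stabilizer of $(x,y)$. Both $p_x$ and $p_y$ also lie there, they commute (each being central in the vertex stabilizer of its defining vertex), and they are distinct (equality would give a canonical involution at two adjacent vertices, which the $3$-arc argument from the previous paragraph forces to be trivial). So $\la p_x,p_y\ra$ is a Klein four-group sitting inside the arc-stabilizer, which has order $|\Aut(X)_x|/3=4$; the two subgroups therefore coincide. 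The three nontrivial involutions in $\la p_x,p_y\ra$ are $p_x,p_y$ and $p_xp_y$, so $\alpha\ne p_x,p_y$ yields the key identity
$$p_u=p_x p_y.$$

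To finish, I would dispatch the remaining four $3$-regular types. If $X$ is of type $\{1,2^1,2^2,3\}$, $\{2^1,2^2,3\}$, $\{2^1,3\}$ or $\{2^2,3\}$, then $\Aut(X)$ contains a $2$-regular subgroup $H$ of index $2$. In $H$ every $2$-arc stabilizer is trivial, so no canonical involution of $\Aut(X)$ belongs to $H$; hence $p_u,p_x,p_y\notin H$. Since $H$ is normal of index $2$, each $p_v$ represents the nontrivial coset of $\Aut(X)/H\cong\ZZ_2$, so $p_xp_y\in H$ while $p_u\notin H$, contradicting $p_u=p_xp_y$. Therefore $X$ must be of type $\{3\}$. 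The delicate step is the second paragraph, where one must correctly identify $\alpha$ inside the edge-stabilizer $D_8$ as the product $p_xp_y$; once that identification is secured, the parity argument via the normal $2$-regular subgroup closes the proof almost immediately.
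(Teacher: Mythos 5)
Your proposal is correct, and it follows the same overall strategy as the paper's proof: everything turns on the index-$2$ $2$-regular subgroup that exists for every $3$-regular type other than $\{3\}$, together with the fact that canonical involutions fix $2$-arcs and therefore cannot lie in that subgroup. The differences are in execution. For the first assertion (cells are $I$- or $Y$-trees) the paper relies implicitly on triviality of $3$-arc stabilizers, whereas you spell out the local dichotomy at a fixed vertex via the order-$2$ kernel of $\Aut(X)_v\to\mathrm{Sym}(N(v))$; this is a welcome explicit account of a step the paper omits. For the second assertion the paper asserts that there is an involution $\beta$ in the $2$-regular subgroup $K$ agreeing with $\alpha$ on $H(u,w)$ and concludes $\alpha=\beta\in K$ because $\alpha\beta$ fixes a $3$-arc; you instead identify the arc stabilizer of $(x,y)$ as the Klein four-group $\la p_x,p_y\ra$ and read off the identity $p_u=p_xp_y$, then finish with the coset-parity argument in $\Aut(X)/H$. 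Your $p_xp_y$ is exactly the paper's $\beta$ (the unique nontrivial element of $K_{(x,y)}$), so the two contradictions are the same one in different clothing; your version has the merit of making the paper's ``clearly there exists $\beta$'' step fully explicit, at the cost of needing the (correct, but worth a word of justification) facts that the image of $\Aut(X)_v$ on $N(v)$ is all of $S_3$ and that $3$-regular cubic graphs are triangle-free.
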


\begin{proof} 
Assume that $X$ is not of type $\{3\}$.
Then $\Aut(X)$ contains a  $2$-regular  subgroup $K\le \Aut(X)$ (of index $2$).
Because of $3$-regularity of $X$ given any vertex $v\in V(X)$ there
exists a canonical involution $\sigma_v$ fixing the tree $Y(v)$ point-wise.
We now show that the only rigid cell of canonical involutions are $Y$-trees.

Suppose, on the contrary, that there exists a canonical involution $\alpha\in \Aut(X)\setminus K$ 
with both the $I$-tree and the $Y$-tree as rigid cells 
of $X[\textrm{Fix}(\alpha)]$. Then there exist a vertex $v\in V(X)$ and adjacent 
vertices $u,w\in V(X)$ such that  $Y(v)$ and  $I(u,w)$ are rigid cells of $\alpha$. 
Clearly, there also exists  an involution $\beta\in K$ whose action on 
$Y(u)\cup Y(w)=H(u,w)$ coincides with the action of $\alpha$.
It follows that the product $\alpha\beta$ acts trivially on $H(u,w)$, 
thus fixing a path of length $3$. But $X$ is $3$-regular, and so $\alpha\beta=1$. Consequently, $\alpha=\beta\in K$, a contradiction. 
\end{proof}

\begin{proposition}
\label{pro:cell24}
Let $X$ be a cubic $4$-regular graph   and let 
$\alpha\in \mathcal{I}(X)$ be a non-semiregular involution of $X$.
Then the only possible $\alpha$-rigid cells are $H$-trees.
\end{proposition}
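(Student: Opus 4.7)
The plan is to show that for every vertex $v$ fixed by $\alpha$, the connected component of $v$ in $X[\Fix(\alpha)]$ is an $H$-tree. The strategy combines a few easy reductions, a group-theoretic identification of the ordered-arc stabilizer as $D_8$, and a ``branch swap pattern'' analysis in the Klein four pointwise stabilizers.

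I would start with the easy reductions. Isolated vertices are ruled out because an involution acting on the three-element set $N(v)$ must have a fixed point, so every fixed vertex has a fixed neighbour. $A$- and $B$-trees are ruled out because each contains a $4$-arc of the form $v_{i1}-v_i-v-v_j-v_{j1}$ with $i\ne j$, whose stabilizer is trivial by $4$-regularity, forcing $\alpha=1$.

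The main step is to show that for every fixed vertex $v$ there is a vertex $w$ in the same rigid cell with $\alpha\in G_w^{[1]}$, where $G_w^{[1]}$ denotes the kernel of the action of $G_w\cong S_4$ on $N(w)$. If $\alpha$ already fixes $N(v)$ pointwise, take $w=v$. Otherwise $\alpha$ acts as a transposition on $N(v)$, fixing precisely one neighbour $v_3$, so $\alpha$ lies in the stabilizer $G_{(v,v_3)}$ of the arc $(v,v_3)$, which has order $8$ and contains the two Klein four subgroups $G_v^{[1]}$ and $G_{v_3}^{[1]}$ with intersection $\la\sigma_{vv_3}\ra$. Hence $G_{(v,v_3)}$ is a group of order $8$ with two distinct Klein four subgroups; together with the fact that it embeds in the edge stabilizer $D_{16}$ or $QD_{16}$ and so cannot be $\ZZ_2^3$, this forces $G_{(v,v_3)}\cong D_8$. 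Since every involution of $D_8$ lies in one of its two Klein four subgroups, $\alpha$ must lie in $G_{v_3}^{[1]}$, and we may take $w=v_3$.

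Finally, for $\alpha\in G_w^{[1]}\cong V_4$ I would carry out the pattern analysis: for each $i\in\{1,2,3\}$, $\alpha$ either fixes or swaps the pair $\{w_{i1},w_{i2}\}$ of second-neighbours through $w_i$, defining a homomorphism $G_w^{[1]}\to\ZZ_2^3$. Its kernel is trivial, because a non-identity element sent to $(0,0,0)$ would fix the $4$-arc $w_{11}-w_1-w-w_2-w_{21}$. Weight-one patterns are forbidden for the same reason: fixing two full branches produces a $4$-arc $w_{j1}-w_j-w-w_k-w_{k1}$ in $\Fix(\alpha)$. The only $2$-dimensional subspace of $\ZZ_2^3$ containing no weight-one vector is the even-weight subspace $\{(0,0,0),(1,1,0),(1,0,1),(0,1,1)\}$, so every non-identity element of $G_w^{[1]}$ has weight-two pattern and fixes exactly one branch $w_i$; the rigid cell at $w$ is then precisely the $H$-tree $H(w,w_i)$. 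I expect the main obstacle to be the identification $G_{(v,v_3)}\cong D_8$ together with the observation that its two natural Klein four subgroups absorb every involution of $D_8$, which is exactly what rules out the ``double-swap'' involutions acting as $(v_1v_2)(v_{31}v_{32})$ on $H(v,v_3)$ that would otherwise produce $I$-tree rigid cells.
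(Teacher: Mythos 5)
Your proof is correct, but it takes a genuinely different route from the paper's. The paper argues by exclusion: it first produces the canonical involution $\alpha'$ fixing $H(u,v)$ pointwise (the nontrivial element of the $3$-arc stabilizer), then supposes an involution with a $Y$-tree (resp.\ $I$-tree) rigid cell exists, conjugates it into position, and shows that its product with $\alpha'$ fixes a path of length at least $4$ (directly, or after squaring via a $Y$-tree cell), contradicting $4$-regularity; the $I$-tree case requires a small case split on whether the two involutions agree on the third neighbourhood. You instead work structurally inside the stabilizers: the arc stabilizer $G_{(v,v_3)}$ has order $8$, contains the two distinct Klein four subgroups $G_v^{[1]}$ and $G_{v_3}^{[1]}$ meeting in the order-$2$ pointwise stabilizer of $H(v,v_3)$, and cannot be $\ZZ_2^3$ (it sits inside $D_{16}$ or $QD_{16}$ -- or, even more directly, it is a Sylow $2$-subgroup of $G_v\cong S_4$), so it is $D_8$; since every involution of $D_8$ lies in one of its two Klein four subgroups, every vertex-fixing involution lies in some $G_w^{[1]}$, and the swap-pattern homomorphism $G_w^{[1]}\to\ZZ_2^3$ must be injective with image the even-weight subspace, forcing each nontrivial element to fix exactly one full branch and hence have $H$-tree cells. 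Both proofs ultimately rest on the same engine (trivial $4$-arc stabilizers), but yours yields a sharper positive statement -- a complete description of all five involutions in $G_{(v,v_3)}$ and of which $H$-tree each element of $G_w^{[1]}$ fixes -- and it cleanly kills the ``double-swap'' involutions that would give $I$-tree cells without the paper's case analysis. The only point worth making explicit is the final step that the cell is \emph{exactly} $H(w,w_i)$: any additional fixed vertex adjacent to $w_{i1}$ or $w_{i2}$ would again complete a fixed $4$-arc, which is the same observation you already use to exclude $A$- and $B$-trees.
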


\begin{proof} 
Note that theoretically the possible rigid cells are
the $I$-tree, the $Y$-tree and the $H$-tree. We now show
that the first two possibilities cannot occur.
Let $u$ and $v$ be adjacent vertices of $X$.
Because of $4$-regularity of $X$ there exists
an involution  $\alpha'\in \Aut(X)$ with $H(u,v)$ as its rigid cell.

Suppose that there exists an involution $\beta$ with $Y$-tree as a rigid cell.
Since vertex stabilizers are conjugate subgroups there exists a 
conjugate $\beta'$ of $\beta$ with $Y(v)$ as a rigid cell.
Obviously, $\beta'$ is different from $\alpha'$ since they have different 
rigid cells containing $v$.
But since $\alpha'$ and $\beta'$
both interchange the remaining two neighbors $v_{11}$ and $v_{12}$
of $v_1$ and the remaining two neighbors $v_{21}$ and $v_{22}$
of $v_2$ it follows that $\alpha'\beta'$ fixes point-wise the $B$-tree $B(v)$,
and thus a path of length $4$. 
Since $s=4$ it follows that 
$\alpha'=\beta'$, a contradiction. Therefore no involution can have
the $Y$-tree as a rigid cell.  

Suppose now that there exists an involution $\gamma$ with $I$-tree as a rigid cell.
Then a conjugate $\gamma'$ of $\gamma$ has $I(v,v_1)$ as a rigid cell.
Obviously, $\gamma'$ is different from $\alpha'$ since they have different 
rigid cells containing $v$ and $v_1$.
Observe that $\alpha'$ and $\gamma'$ both interchange the remaining neighbors of 
$v_{11}$ and $v_{12}$. The actions of $\alpha'$ and $\gamma'$
on these remaining neighbors of $v_{11}$ and $v_{12}$ are either different or identical.
In the first case, $Y(v_1)$ is a rigid cell of $\alpha'\gamma'$.
By previous paragraph $(\alpha'\gamma')^2\ne 1$, and moreover it fixes
a path of length greater than or equal to $4$, which is not possible in a 
$4$-regular graph.
This completes the proof of Proposition~\ref{pro:cell24}.
\end{proof}
 
\begin{proposition}
\label{pro:cell25}
Let $X$ be a cubic $5$-regular graph   and let 
$\alpha\in \mathcal{I}(X)$ be a non-semiregular involution of $X$.
Then the only possible $\alpha$-rigid cells are $H$-trees and $A$-trees, 
with both types of cells
possibly occurring simultaneously only when the graph $X$ is of type $\{5\}$.
\end{proposition}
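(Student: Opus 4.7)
The plan is to mirror the strategy used in the proofs of Propositions~\ref{pro:cell23} and~\ref{pro:cell24}. By the structure of the vertex stabilizer $\Aut(X)_v\cong S_4\times\ZZ_2$ and its action on the ball of radius $2$ around $v$, the a priori possible $\alpha$-rigid cells containing a fixed vertex $v$ of $\alpha$ are the $I$-tree, the $Y$-tree, the $H$-tree, the $B$-tree, and the $A$-tree. The two main ingredients I will use are: $(a)$ $5$-regularity, namely that fixing a $5$-arc pointwise forces the identity; and $(b)$ the existence, for every vertex $v$, of a canonical involution $\tau_v\in\Aut(X)$ with $A(v)$ as its rigid cell containing $v$ (the central involution of $\Aut(X)_v$), together with the existence, for every edge $uv$, of a canonical involution $\sigma_{uv}\in\Aut(X)$ having $H(u,v)$ as a rigid cell.

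To exclude the $I$-, $Y$- and $B$-trees as rigid cells, I will in each case assume a $\beta\in\mathcal{I}(X)$ exists with the purported rigid cell and find a canonical involution fixing strictly more of the local neighborhood, then analyze the product. If $Y(v)$ were a rigid cell of $\beta$, then both $\beta$ and $\tau_v$ fix $Y(v)$ pointwise, but $\tau_v$ also fixes all of $N^2(v)$ while $\beta$ must swap each pair $\{v_{i1},v_{i2}\}$; composing $\beta\tau_v$ once more with a canonical involution at a neighbor $v_i$ produces a non-trivial element fixing a $5$-arc, a contradiction. The $I$-tree case is handled analogously, comparing $\beta$ first with $\sigma_{uv}$ and then with a canonical involution at a common neighbor. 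The $B$-tree case follows by combining $\beta$ (which already fixes all of $A(v)$ except for one pair $\{v_{i1},v_{i2}\}$) with $\tau_v$, so that $\beta\tau_v$ fixes $A(v)$ pointwise and then one pushes one step further using $5$-regularity.

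For the coexistence statement, suppose $\alpha\in\mathcal{I}(X)$ has both an $H$-tree rigid cell $H(u,v)$ and an $A$-tree rigid cell $A(w)$, and assume $X$ is not of type $\{5\}$. By Table~\ref{tab:cubic-types}, in every other $5$-regular type $\Aut(X)$ contains a $4$-regular subgroup $K$ of index $2$. By Proposition~\ref{pro:cell24} applied to $K$ there is an involution $\gamma\in K$ with $H(u,v)$ as rigid cell. The product $\alpha\gamma$ fixes $H(u,v)$ pointwise, in particular it fixes the $3$-arc $v_1$--$u$--$v$--$v'$ contained in $H(u,v)$; the pointwise stabilizer of $H(u,v)$ in $K$ is precisely $\{1,\gamma\}$, and a parity/coset argument using the index $2$ inclusion $K\le \Aut(X)$ together with the central involution $\tau_w$ (which lies in $K$ in the present non-type-$\{5\}$ situation) forces $\alpha\gamma\in K$, hence $\alpha\in K$. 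But Proposition~\ref{pro:cell24} then says $\alpha$ can have only $H$-tree rigid cells, contradicting $A(w)$ being one.

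The principal obstacle is the first step, the exclusion of the $Y$-, $I$- and $B$-tree rigid cells. Unlike the $4$-regular case of Proposition~\ref{pro:cell24}, where fixing a $4$-arc already suffices, here the target is to exhibit a fixed $5$-arc, so typically two or three canonical involutions have to be composed and the resulting action tracked out to the third neighborhood. The local action of the central involution $\tau_v\in\Aut(X)_v$ on $A(v)$, combined with the involutions $\sigma_{uv}$ on $H(u,v)$, provides precisely the extra ``one step'' of fixed vertices needed beyond the $4$-regular analysis, but verifying the necessary long fixed paths in each subcase is the most delicate part of the argument.
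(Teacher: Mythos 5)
Your overall architecture (rule out $I$-, $Y$- and $B$-trees by composing the hypothetical involution with canonical involutions and invoking $5$-regularity; then handle coexistence via the index-$2$ $4$-regular subgroup and Proposition~\ref{pro:cell24}) matches the paper's for the $Y$-tree, $I$-tree and coexistence parts. But there is a genuine gap at the step you yourself identify as the principal obstacle, and it is not merely a matter of ``delicate verification.'' Your ingredient $(b)$ --- the existence, for every $v$, of a canonical involution $\tau_v$ whose rigid cell is the full $A$-tree $A(v)$ --- is not available at the outset: $5$-regularity only guarantees an involution in $\Aut(X)_v$ fixing $N^2(v)\setminus\{v_{i1},v_{i2}\}$ pointwise, i.e.\ one whose rigid cell is a priori either $A(v)$ \emph{or} a $B$-tree $B(v)$. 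Establishing that it must be $A(v)$ is exactly the content of excluding the $B$-tree, so using $\tau_v$ to exclude the $B$-tree is circular. Moreover the computation you sketch is wrong: if $\beta$ fixes $B(v)$ pointwise and swaps $v_{i1},v_{i2}$, while $\tau_v$ fixes all of $A(v)$, then $\beta\tau_v$ still swaps $v_{i1},v_{i2}$; it does \emph{not} fix $A(v)$ pointwise. The same circularity infects your $Y$-tree and $I$-tree arguments, since both lean on $\tau_v$.

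The paper breaks this circle with a different kind of argument (following \cite[Lemma~1]{D79}), which has no counterpart in your proposal: if a $B$-tree were a rigid cell, then by conjugacy one obtains involutions in $\Aut(X)_v$ restricting to each single transposition $(v_{i1}\,v_{i2})$ on $A(v)$, which forces every $2$-arc stabilizer to be elementary abelian of order $8$; one then exhibits two elements of the stabilizer of the $2$-arc $(v,v_1,v_{11})$ that fail to commute, a contradiction. Only after this is the canonical involution with rigid cell $A(v)$ available, and the $Y$- and $I$-tree exclusions can proceed roughly as you describe. Your coexistence paragraph is also more complicated than necessary (the ``parity/coset argument'' is not needed): once one knows the rigid cells are $H$- or $A$-trees, an involution $\alpha\notin K$ having an $H$-tree cell $H(u,w)$ agrees on $H(u,w)$ and its fringe with some involution $\beta\in K$, so $\alpha\beta$ fixes a path of length greater than $4$, whence $\alpha=\beta\in K$, a contradiction. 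In summary: the proposal needs the $2$-arc-stabilizer argument (or an equivalent) to get off the ground; as written, its key first step both assumes what is to be proved and contains a false assertion about $\beta\tau_v$.
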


\begin{proof} 
The stabilizer $\Aut(X)_v$
of a vertex $v\in V(X)$ is isomorphic to $S_4\times \ZZ_2$.
Hence the center of $\Aut(X)_v$ is isomorphic to $\ZZ_2$,
and moreover there are precisely $19$ involutions in $\Aut(X)_v$.
Because of $5$-regularity of $X$ there exists an involution $\alpha'$
in $\Aut(X)_v$ with either $A(v)$ or $B(v)$ as a rigid cell.

In what follows we essentially translate (into our language) the arguments 
from \cite[Lemma~1]{D79}  where it is shown that the central
involution in $\Aut(X)_v$ has $A(v)$ as a rigid cell.
Suppose that $\alpha'$ has a $B$-tree $B(v)$ as a rigid cell. Then for 
conjugacy reasons    there exist   involutions
$\alpha_1$, $\alpha_2$ and $\alpha_3$ in $\Aut(X)_v$
whose respective restrictions
to $A(v)$  are:
$(v_{11}\,v_{12})$, $(v_{21}\,v_{22})$ and 
$(v_{31}\,v_{32})$. 
Their 
products $\alpha_1\alpha_2$, $\alpha_1\alpha_3$, $\alpha_2\alpha_3$
and $\alpha_1\alpha_2 \alpha_3$ are also involutions in $\Aut(X)_v$, and so
the stabilizer of the  $2$-arc $(v_{i},v,v_{j})$, where $i,j\in\{1,2,3\}$ and $i\ne j$,
is an elementary abelian group $\ZZ_2^3$. Since $X$ is $5$-regular,
stabilizers of $2$-arcs in $X$ are conjugate subgroups, and so 
the stabilizer of any $2$-arc in $X$ is isomorphic to $\ZZ_2^3$.
Let $\beta \in \Aut(X)_{v} \cap \Aut(X)_{v_{11}}$ be such that $\beta(v_2)=v_3$
and $\beta(v_{22})=v_{31}$. (Because of $5$-regularity such an element clearly exists.)
Then $\beta\alpha_2(v_{21})=\beta(v_{22})=v_{31}$ and 
$\alpha_2\beta(v_{21})=\alpha_2(v_{32})=v_{32}$, implying that
$\alpha_2\beta\ne\beta\alpha_2$. But this is a contradiction since 
$\alpha_2$ and $\beta$ both belong to the stabilizer of
the $2$-arc $(v,v_{1},v_{11})$, which is an abelian group.
This shows that the $B$-tree cannot occur as a rigid cell in $X$. 
Moreover, $\alpha'$ has the whole of $A(v)$ as a rigid cell.
In other words, $\alpha'$ is the canonical involution for $v$.
This means that $\alpha'$ is in the kernel of the restriction of $\Aut(X)_v$
to $A(v)$. Let us also remark that the restrictions of involutions  in $\Aut(X)_v$
to $N^2(v)$ are $(v_{11}\,v_{12})(v_{21}\,v_{22})$,  
$(v_{11}\,v_{12})(v_{31}\,v_{32})$ and $(v_{21}\,v_{22})(v_{31}\,v_{32})$.

Next, we show that the $Y$-tree cannot be a rigid cell.
Assume on the contrary that there exists an involution $\sigma$
with the $Y$-tree as a rigid cell.
Then there is an involution $\sigma'$ (a conjugate of $\sigma$) 
whose rigid cell is $Y(v_1)$. 
Obviously, $\sigma'$ is different from $\alpha'$ since they have different 
rigid cells containing $v$.
But then $\alpha'\sigma'$ either fixes a path of length $5$
or it has $B(v_1)$ as a rigid cell.
If the first possibility occurs then $\alpha'=\sigma'$, 
contradicting our assumption on $\sigma'$. 
If the second possibility occurs then 
a non-identity element $\alpha'\sigma'$ has the $B$-tree
as a rigid cell. But this is not possible by Proposition~\ref{pro:cell1}
if $\alpha'\sigma'$ is of order $4$, and by the argument in the previous paragraph
if $\alpha'\sigma'$ is an involution.  
 
We now show that the $I$-tree cannot be a rigid cell.
Assume on the contrary that there exists an involution $\gamma$
with the $I$-tree as a rigid cell.
Then there is an involution $\gamma'$ (a conjugate of $\gamma$) 
whose rigid cell is $I(v_1, v_{11})$. 
Obviously, $\gamma'$ is different from $\alpha'$ since they have different 
rigid cells containing $v_1$.
But then it can be shown that 
$\alpha'\gamma'$ either has $Y(v_{11})$ or $B(v_{11})$ as a rigid cell 
or it fixes a path of length greater than $4$.
Each of these cases leads to a contradiction
by the previous arguments with the exception of the case
where $\alpha'\gamma'$ is of order $4$ and has $Y(v_{11})$ as a rigid cell.
There exists a conjugate $\delta$ of $\alpha'\gamma'$  with $Y(v_1)$ as a rigid cell.
But then $\delta\alpha'$ either has $B(v_1)$ as a rigid cell or it fixes a path
of length greater than $4$. Both possibilities clearly lead to a contradiction.
This shows that the only possible rigid cells in $X$ are $H$-trees and $A$-trees.

To finish the proof assume that $X$ is not of type $\{5\}$.
Then $\Aut(X)$ contains a $4$-regular  subgroup $K$ (of index $2$).
Because of $5$-regularity of $X$ 
it is clear that $A$-trees occur as rigid cells of non-semiregular involutions 
in $\Aut(X)\setminus K$. In order to prove that all rigid cells  of such involutions are
$A$-trees suppose, on the contrary, that an involution $\alpha\in \Aut(X)\setminus K$ 
has both the $H$-tree and the $A$-tree as rigid cells 
of $X[\textrm{Fix}(\alpha)]$. Then there exist a vertex $v\in V(X)$ and adjacent 
vertices $u,w\in V(X)$ such that  $A(v)$ and  $H(u,w)$ are rigid cells of $\alpha$. 
Clearly, there also exists  an involution $\beta\in K$ whose action on 
$H(u,w)$ coincides with the action of $\alpha$.
It follows that the product $\alpha\beta$ fixes a path of length greater than $4$. 
But then $\alpha\beta=1$, and so $\alpha=\beta\in K$, a contradiction. 
This completes the proof of  
Proposition~\ref{pro:cell25}.
\end{proof}

As an almost immediate consequence of Proposition~\ref{pro:cell25}
we can now determine rigid cells of automorphisms of order $4$.

\begin{proposition}
\label{pro:cell3}
Let $X$ be a cubic $s$-regular graph and $\alpha$ an automorphism of $X$ of
order $4$ fixing a vertex. Then the only possible $\alpha$-rigid cells 
are the $I$-trees.  
\end{proposition}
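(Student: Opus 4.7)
The plan is to combine Proposition~\ref{pro:cell1}(ii), which already restricts the candidates to $I$-trees and $Y$-trees, with Propositions~\ref{pro:cell24} and~\ref{pro:cell25} via a squaring argument. First I would observe that the vertex stabilizers $\ZZ_3$, $S_3$ and $S_3\times\ZZ_2$ attached to $s\in\{1,2,3\}$ contain no element of order $4$; hence an $\alpha\in\Aut(X)_v$ of order $4$ forces $s\in\{4,5\}$, and only these two regularity classes need treating. Suppose for contradiction that such an $\alpha$ has $Y(v)$ as a rigid cell. Then $\alpha$ pointwise fixes $v,v_1,v_2,v_3$, and because $Y(v)$ is precisely the component of $\Fix(\alpha)$ containing $v$, no vertex of $N^2(v)$ is fixed; since $\alpha$ stabilises every $v_i$ and fixes $v$, it must swap each pair $\{v_{i1},v_{i2}\}$. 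Squaring therefore yields a non-identity involution $\alpha^2\in\Aut(X)_v$ that pointwise fixes the whole $A$-tree $A(v)$ on ten vertices.

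For $s=4$, this instantly contradicts Proposition~\ref{pro:cell24}: the component of $\Fix(\alpha^2)$ containing $v$ would have to be an $H$-tree with six vertices, yet it already contains the ten vertices of $A(v)$.

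For $s=5$, Proposition~\ref{pro:cell25} allows this component to be either an $H$-tree or an $A$-tree; cardinality rules out the $H$-tree and forces an $A$-tree of exactly ten vertices which, containing $A(v)$, must coincide with $A(v)$. Thus $\alpha^2$ is the canonical involution for $v$, which by the discussion in the proof of Proposition~\ref{pro:cell25} is the (unique) central element of $\Aut(X)_v\cong S_4\times\ZZ_2$. On the other hand, every order-$4$ element of $S_4\times\ZZ_2$ has the form $(\sigma,\epsilon)$ with $\sigma$ a $4$-cycle in $S_4$, so its square $(\sigma^2,0)$ is a non-trivial double transposition sitting in $V_4\times\{0\}$ and in particular is different from the central involution $(e,1)$. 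This contradiction rules out the $Y$-tree also in the $s=5$ case and finishes the proof.

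I expect the $s=5$ step to be the principal obstacle, because identifying $\alpha^2$ with the central involution rests on the kernel of the action of $\Aut(X)_v\cong S_4\times\ZZ_2$ on $A(v)$ being exactly $\{e\}\times\ZZ_2$. To justify this it suffices, thanks to the explicit description of how involutions restrict to $N^2(v)$ given in the proof of Proposition~\ref{pro:cell25}, to observe that no non-trivial element of $V_4\le S_4$ can act trivially on $N^2(v)$; everything else in the argument is routine cardinality bookkeeping.
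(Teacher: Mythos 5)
Your reduction to $s\in\{4,5\}$ and your treatment of $s=4$ coincide with the paper's: both square $\alpha$ and observe that a $Y$-tree rigid cell forces $\alpha^2$ to fix the $A$-tree $A(v)$ pointwise, which is impossible in a $4$-regular graph. For $s=5$, however, you take a genuinely different route. The paper argues combinatorially: it multiplies a conjugate $\alpha'$ of $\alpha$ having $Y(v_1)$ as a rigid cell by the canonical involution $\beta$ of $v$, and shows that the non-identity element $\beta\alpha'$ would fix the $B$-tree $B(v_1)$ pointwise, so that its rigid cell is either a forbidden $B$-tree or contains a $5$-arc --- both impossible. You instead work inside the vertex stabilizer: $\alpha^2$ lies in the pointwise stabilizer of $A(v)$, which by $5$-regularity has order $2$ and therefore consists of the identity and the canonical involution, which is central in $\Aut(X)_v\cong S_4\times\ZZ_2$; but the square of any order-$4$ element of $S_4\times\ZZ_2$ is a double transposition in $S_4\times\{0\}$, hence non-central. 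Your closing worry about identifying the kernel of the action on $A(v)$ with $\{e\}\times\ZZ_2$ can be settled even more cheaply than by inspecting how $V_4$ acts on $N^2(v)$: since $A(v)$ is $\Aut(X)_v$-invariant, that kernel is a normal subgroup of order $2$, hence central, hence equal to $\{e\}\times\ZZ_2$. Both arguments lean on Proposition~\ref{pro:cell25} --- the paper on its exclusion of $B$-trees, you on the existence and centrality of the canonical involution --- but yours replaces the case analysis of rigid cells of $\beta\alpha'$ by a one-line computation in $S_4\times\ZZ_2$, which is arguably cleaner.
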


\begin{proof}
The assumption on the existence of an automorphism $\alpha$
of order $4$ fixing a vertex implies 
that either $s=4$ or $s=5$ (see Table~\ref{tab:vertex-and-edge-stabilizers}). 
By Proposition~\ref{pro:cell1} the only possible candidates for $\alpha$-rigid cells
are $I$-trees and $Y$-trees. 

Suppose that $\alpha$ has a rigid cell isomorphic to 
a $Y$-tree. Then $\alpha^2$ has a rigid cell isomorphic to an $A$-tree,
and thus $X$ is $5$-regular.
Let $v\in V(X)$ with neighbors $v_1$, $v_2$ and $v_3$. 
By assumption there exists a conjugate $\alpha'$ of $\alpha$
such that $Y(v_1)$ is an $\alpha'$-rigid cell.
Let $\beta\in\Aut(X)_v$ be the canonical involution of $v$. 
It follows that the $B$-tree 
$B(v_1)=X[\{v_1\}\cup N(v_1)\cup N^2(v_1)\setminus \{v_2,v_3\}]$ 
is contained in a rigid cell of a non-identity automorphism $\beta\alpha'\in \Aut(X)_v$.
Clearly this rigid cell does not contain $v_2$ and $v_3$, and so
it is either a $B$-tree or a rigid cell containing a path of length greater than $4$.
Both possibilities lead to a contradiction. 
We conclude that the only possible $\alpha$-rigid cells 
are $I$-trees.
\end{proof}

%%%%%%%%%%%%%%%%%%%%%%%%%%%%%%%%%%%%%%%%%%%%%%%%%%%%%%
%%%%%%%%%%%%%%%%%%%%%%%%%%%%%%%%%%%%%%%%%%%%%%%%%%%%%%
%%%%%%%%%%%%%%%%%%%%%%%%%%%%%%%%%%%%%%%%%%%%%%%%%%%%%%
\section{Proof of Theorem~\ref{thm:main}}
\label{sec:main}
\noindent
%%%%%%%%%%%%%%%%%%%%%%%%%%%%%%%%%%%%%%%%%%%%%%%%%%%%%%
%%%%%%%%%%%%%%%%%%%%%%%%%%%%%%%%%%%%%%%%%%%%%%%%%%%%%%
%%%%%%%%%%%%%%%%%%%%%%%%%%%%%%%%%%%%%%%%%%%%%%%%%%%%%%
 
%Solutions for types $\{2^1,3\}$, $\{2^2,3\}$, $\{2^1, 2^2,3\}$, $\{1, 2^1, 2^2,3\}$,
%$\{4^1, 5\}$, $\{4^2, 5\}$, $\{4^1, 4^2,5\}$, $\{1,4^1, 4^2,5\}$

The proof of Theorem~\ref{thm:main} is carried out through a series 
of propositions (Propositions~\ref{pro:cubic-1-regular} --\ref{pro:5-4-oddx}),
each dealing with particular types of cubic symmetric graphs. 
The first of these propositions gives the  proof of Theorem~\ref{thm:main} 
for cubic symmetric graphs of type $\{1\}$.
In fact it is a slightly more general for it gives a necessary and sufficient 
conditions  on existence of odd automorphisms in  
any cubic symmetric graph admitting a $1$-regular subgroup
in its automorphism group.

\begin{proposition}
\label{pro:cubic-1-regular}
Let $X$ be a cubic symmetric graph of order $2n$ 
admitting a $1$-regular subgroup $G\le \Aut(X)$.
Then there exists an odd automorphism of $X$ in $G$  if and only if $n$ is odd. 
\end{proposition}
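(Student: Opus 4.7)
The plan is to exploit the presentation (\ref{G1}) of the universal $1$-regular group $G_1 = \langle h, a \mid h^3 = a^2 = 1 \rangle$ and reduce the entire sign question for $G$ to computing the sign of the two generators. The point is that once one knows $\text{sgn}(h)$ and $\text{sgn}(a)$, the whole sign homomorphism on $G$ is determined.

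First I would record that since $G$ is $1$-regular on $X$, it is a quotient of $G_1$ via a torsion-free normal subgroup, and so $G = \langle h, a \rangle$ where (abusing notation and writing $h, a$ for their images in $G$) the element $h$ has order $3$ and $a$ has order $2$. Neither image can collapse: if $h=1$ then $|G|\le 2$, and if $a=1$ then $|G|\le 3$, both impossible because $|G|=6n$.

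Next I would compute the sign of each generator. The element $h$ has order $3$, so its cycles on $V(X)$ have length $1$ or $3$, both even permutations, giving $\text{sgn}(h) = +1$. For $a$, the vertex stabilizers in $G$ are isomorphic to $\mathbb{Z}_3$ by Table~\ref{tab:vertex-and-edge-stabilizers}, and therefore contain no involution; consequently $a$ fixes no vertex. Being an involution acting fixed-point-freely on a set of size $2n$, it decomposes into exactly $n$ disjoint transpositions, so $\text{sgn}(a) = (-1)^n$.

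Finally, since $\text{sgn}$ is a homomorphism and $G=\langle h,a\rangle$, the image $\text{sgn}(G)$ is generated by $\text{sgn}(h)$ and $\text{sgn}(a)$. If $n$ is odd, then $\text{sgn}(a)=-1$ and $a$ itself is an odd automorphism in $G$. If $n$ is even, then $\text{sgn}(h)=\text{sgn}(a)=+1$, so $\text{sgn}(G)=\{+1\}$ and no element of $G$ acts as an odd permutation. This proves both directions. There is no real obstacle here; the only subtle point — easily dispatched — is verifying that the images of $h$ and $a$ in the quotient $G$ retain their full orders $3$ and $2$, which follows from $|G|=6n$ together with $G$ being generated by these two images.
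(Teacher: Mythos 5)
Your proof is correct and follows essentially the same route as the paper's: both reduce the question to the signs of the generators $h$ and $a$ from the presentation (\ref{G1}), note that $h$ is even because it has odd order, and conclude that $a$, being a fixed-point-free involution (semiregular), is odd precisely when $n$ is odd. Your write-up merely makes explicit a few points the paper leaves implicit (that the images of $h$ and $a$ retain their orders, and why $a$ has no fixed vertices).
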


\begin{proof}
Let $v\in V(X)$. By (\ref{G1}) the group $G$ is 
generated by $G_v=\la h\ra\cong \ZZ_3$, $v\in V(X)$, and an involution
$a$ interchanging two adjacent vertices. Automorphisms in the vertex stabilizer 
$G_v\cong \ZZ_3$ are all even automorphisms as they are of odd order.
Hence we can conclude that $G$ 
admits odd automorphisms if and only if $a$ is 
an odd automorphism. Since $a$ is semiregular the result follows.
\end{proof}

The next three propositions give the answer regarding 
existence of odd automorphisms
in cubic $s$-regular graphs, $s\ge 2$, having no $(s-1)$-regular subgroup of 
automorphisms. The first proposition in the series 
shows that all automorphisms of cubic symmetric graphs of type $\{2^2\}$ are even.
 
\begin{proposition}
\label{pro:cubic-2-2}  
Let $X$ be a cubic symmetric graph of order $2n$ of type $\{2^2\}$. 
Then $X$ has no odd automorphisms. 
\end{proposition}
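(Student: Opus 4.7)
The plan is to argue by contradiction: suppose $X$ admits an odd automorphism and derive a structural impossibility. Let $G=\Aut(X)$ and let $A = G\cap\mathrm{Alt}(V(X))$ denote the even automorphisms, an index-$2$ subgroup of $G$. Fix the edge stabilizer $\langle a\rangle = G_{\{u,v\}}\cong\ZZ_4$ and write $p=a^2$ for the canonical involution fixing $u$ and $v$; let $f_p$ be the number of rigid cells of $p$.

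The first step is to show $\mathrm{sgn}(p)=+1$. By Proposition~\ref{pro:cell212} the rigid cells of $p$ are $I$-trees, so $|\mathrm{Fix}(p)|=2f_p$. Since no element of order $4$ can lie in a vertex stabilizer (as $G_v\cong S_3$ has no element of order $4$), $a$ is fixed-point-free, and every $\langle a\rangle$-orbit on $V(X)$ has length $2$ (exhausting $\mathrm{Fix}(p)$) or length $4$ (partitioning the complement). Hence $2n-2f_p\equiv 0\pmod 4$, i.e., $n-f_p$ is even. Since $p$ has $n-f_p$ transpositions, $p$ is even. Combined with the order-$3$ element $h$ being automatically even, we get $G_v=\langle h,p\rangle\subseteq A$. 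Hence $A_v=G_v$ and $A$ has exactly two orbits on $V(X)$, each of size $n$, interchanged by any odd element. Under the contradiction hypothesis, $a$ is odd (the abelianization $G^{\mathrm{ab}}$ is a quotient of $G_2^{2,\mathrm{ab}}=\ZZ_4$, whose only non-trivial map to $\ZZ_2$ sends $\bar a\mapsto 1$), so $a$ swaps the two $A$-orbits while flipping the edge $\{u,v\}$. By edge-transitivity every edge then joins the two orbits, so $X$ is bipartite with parts $V_1\sqcup V_2$.

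At this point the cycle structure of $a$ can be computed directly: $a$ contributes $f_p$ transpositions (one per rigid cell of $p$) and $(n-f_p)/2$ four-cycles, giving $\mathrm{sgn}(a)=(-1)^{f_p+(n-f_p)/2}=(-1)^{(n+f_p)/2}$. Thus the odd-hypothesis is equivalent to the congruence $n+f_p\equiv 2\pmod 4$, and it remains to rule this out. I would close the argument by combining the girth bound from Proposition~\ref{pro:girth} (girth $>9$) with a detailed study of $C_G(p)$: using that $|C_G(p)|=4f_p$ and $C_G(p)/\langle p\rangle$ acts regularly on $\mathrm{Fix}(p)$, with the image of $a$ acting as a fixed-point-free involution on the $2f_p$ vertices of $\mathrm{Fix}(p)$ (so its orbits on rigid cells have sizes $1$ or $2$).

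The main obstacle, and the heart of the proof, is this final step: ruling out the congruence $n+f_p\equiv 2\pmod 4$ in the bipartite case. The preceding reductions are essentially formal, but pinning down the parity of $f_p$ relative to $n$ requires more than the local analysis of rigid cells; I expect it to use the girth restriction in an essential way, since girth $>9$ guarantees that rigid cells are sufficiently isolated that the action of $\langle a\rangle$ on them cannot be compatible with $n+f_p\equiv 2\pmod 4$. A plausible alternative route is to exploit the structure of the bipartition-preserving subgroup, arc-transitive on arcs of each direction with vertex stabilizer $S_3$, and to play the $G_2^2$-relations (in particular $a^2=p$, so that $\langle a\rangle$ is cyclic rather than Klein) against the bipartite double cover picture to force a $4$-divisibility that contradicts the odd-hypothesis.
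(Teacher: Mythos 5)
Your reductions are correct and in fact land exactly where the paper's proof starts its real work, but the proof is not complete: the step you defer --- ruling out $n+f_p\equiv 2\pmod 4$, equivalently showing that $\la a\ra$ has an even number of orbits on $V(X)$ --- is the entire substance of the paper's argument, and you only speculate about how it might go. Everything before that point is essentially formal bookkeeping (as you yourself note): $p=a^2$ is even because $n\equiv f_p\pmod 2$, hence $G_v$ is even, hence the even subgroup is intransitive and $X$ is bipartite, hence oddness of some automorphism is equivalent to oddness of $a$, whose sign is $(-1)^{(n+f_p)/2}$. The paper performs the same reduction in the language of counting orbits of $\la a\ra$ (your $f_p$ is its number of length-$2$ orbits, your $(n-f_p)/2$ its number of length-$4$ orbits), so up to this point the two arguments coincide. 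A small caveat: the transpositions of $a$ are the length-$2$ orbits of $\la a\ra$, which need not be the same pairs as the $I$-tree rigid cells of $p$ (a length-$2$ orbit may have no inner edge); only the counts agree, both being $|\Fix(p)|/2$.

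What is missing is the combinatorial analysis that the paper carries out on the quotient graph $X_a$. There, orbits of length $2$ are split into those with an inner edge (type 1) and those without (type 2), and orbits of length $4$ (type 3) are shown to have no inner edges using girth $>9$ together with the absence of edge-flipping involutions in type $\{2^2\}$. Proposition~\ref{pro:cell212} (only $I$-trees as rigid cells of $a^2$) then forces the local adjacency patterns: a type 1 orbit cannot neighbour another type 1 orbit (girth) nor two type 2 orbits ($H$-tree rigid cell), so it attaches to a type 3 orbit; a type 2 orbit cannot have three type 2 neighbours ($Y$-tree rigid cell), so type 2 orbits come in pairs and their number is always even; finally, deleting all length-$2$ orbits from $X_a$ leaves a subdivision of a cubic graph, whose even number of degree-$3$ vertices pins down the parity of the number of type 3 orbits. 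Combining these with $t_1+t_2\equiv n\pmod 2$ gives an even total number of orbits in both parity cases for $n$. Neither of your two proposed routes (the centralizer $C_G(p)$, or the bipartite double cover) is developed far enough to substitute for this; as written, the proof establishes only that \emph{if} $X$ of type $\{2^2\}$ has an odd automorphism then it is bipartite and $n+f_p\equiv 2\pmod 4$, not that no odd automorphism exists.
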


\begin{proof}
First, we note that, by Proposition~\ref{pro:girth}, 
\begin{equation}
\label{girth}
\textrm{$X$ is of girth greater than $9$.}
\end{equation}
Suppose first that $X$ is not bipartite.
Then it must have only even automorphisms for otherwise
there would be an intransitive index $2$ subgroup of even automorphisms
in $\Aut(X)$ given rise to a bipartition of $X$. We may therefore
assume that $X$ is bipartite.
By (\ref{G22}) the automorphism group $\Aut(X)$ is
generated by an automorphism $h$ of order $3$ and an edge reversing 
automorphism $a$ of order $4$. Since $h$ is of odd order it is an even permutation.
To complete the proof we need to show that $a$ is an even permutation too. 

Consider the orbits of $\la a\ra$: they are either of length $2$ or  $4$.
Orbits of $\la a\ra$ of length $2$ are of two types: 
those with an inner edge (we refer to them as orbits of {\em type 1}) 
and those with no inner edge (we refer to them as orbits of {\em type 2}).
Finally, orbits of $\la a\ra$ of length $4$  will be referred to as orbits of {\em type 3}.
Let us remark that  orbits of type 3 have no inner edges because such an edge
would either give rise to a $4$-cycle in $X$ 
(contradicting (\ref{girth})) 
or would give rise to an edge flipped by the
involution $a^2$, which is not possible in 
cubic symmetric graphs of type $\{2^2\}$.
Observe that the numbers of orbits of type 1 and type 2 
are of the same parity when the bipartition sets are of even cardinality 
(that is, for $n$ even), and are of different parity when the bipartition sets are of 
odd cardinality (that is, for $n$ odd).

Let us now consider the quotient graph $X_a$ of $X$ 
with respect to the set of orbits of $\la a\ra$.
A  type 1 orbit  in $X_a$ cannot be adjacent to another type 1 orbit,
for this would give a cycle of length $4$ in $X$, contradicting (\ref{girth}).
Further, a type 1 orbit cannot be adjacent to two type 2 orbits,
for this would imply that the involution $a^2$ has a rigid cell containing the $H$-tree
as a subgraph, which, by Proposition~\ref{pro:cell212}, 
is impossible in a $2$-regular graph.
The only remaining possibility therefore is that an orbit of type 1 is adjacent to
an orbit of type 3. In other words, 
each orbit of type 1 is paired off with an
orbit of type 3.

We now turn to orbits of type 2. Such an orbit cannot be adjacent to three 
other orbits of type 2, for otherwise the involution $a^2$ would have a 
$Y$-tree as a rigid cell, contradicting Proposition~\ref{pro:cell212}.
The only remaining possibility is that an orbit of type 2
is adjacent to an orbit $O$ of type 3 and another orbit of type 2, the latter
being adjacent also to an orbit of type 3 which has to be different from $O$ 
in view of (\ref{girth}).
In conclusion, orbits of type 2 come in pairs, implying that 
\begin{equation}
\label{eq:orbits}
\textrm{the number of orbits of type 2 is even.}
\end{equation}
Moreover, each such pair of orbits of type 2 has two
neighbors in $X_a$, namely a pair of orbits of type 3.

We now consider orbits of type 3.
Recall that they have no inner edges.
Further,  by (\ref{girth}) two adjacent orbits of type 3 must be
 joined by a single matching.
We conclude that in $X_a$ an orbit of
type 3 is either adjacent to three orbits of type 3
 or to two orbits of type 3 and an orbit of length $2$
(either  of type 1 or of type 2). 
It follows that there are only five possible local adjacency structures of 
orbits   in $X_a$, those shown in Figure~\ref{fig:22}.

\begin{figure}[h!]
\begin{center}
\includegraphics[width=90mm]{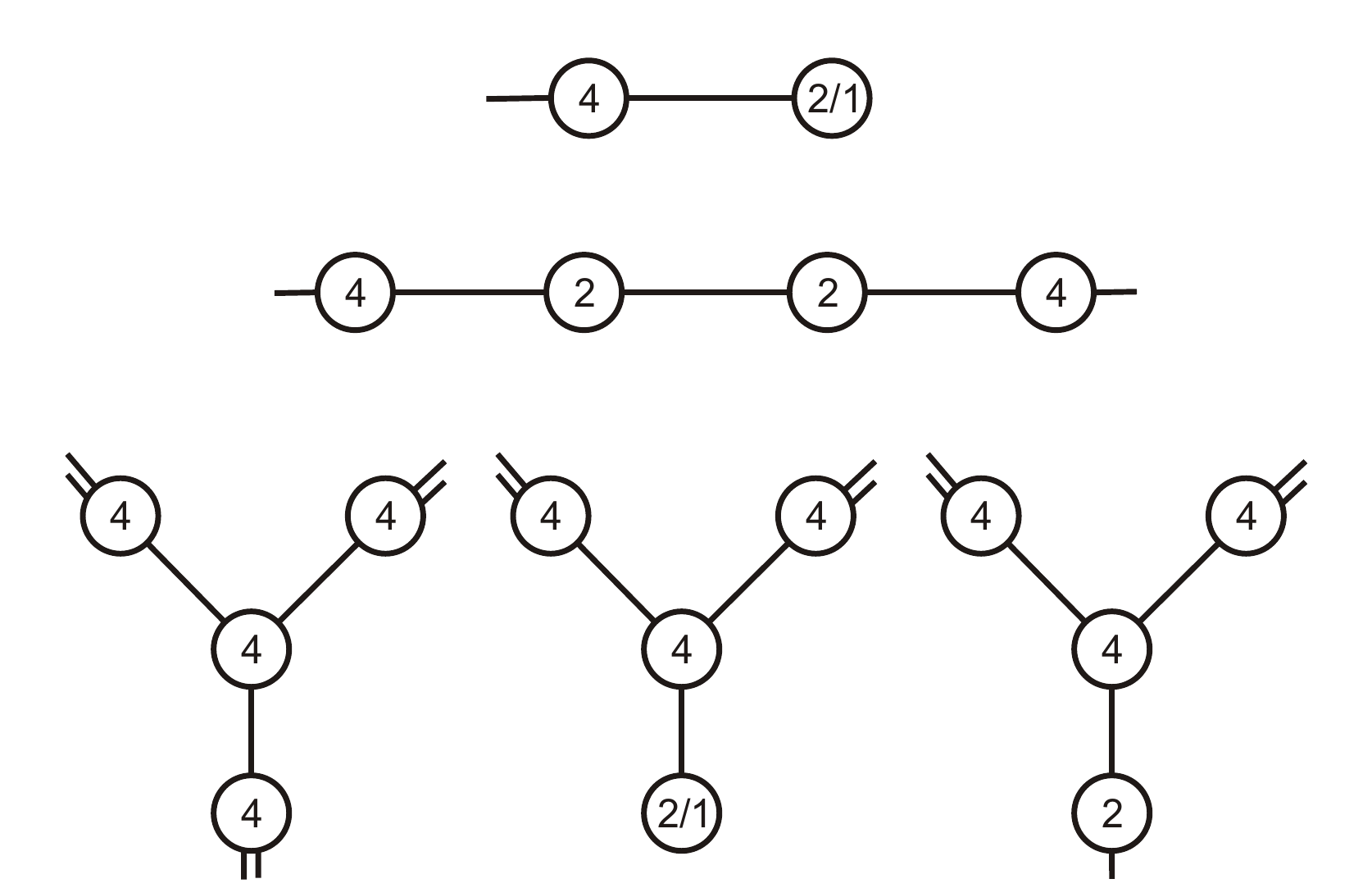}
\caption{\label{fig:22} {\footnotesize Possible local structures of orbits in $X_a$. }}
\end{center}
\end{figure}

The rest of the argument depends on the parity of $n$.
Suppose first that $n$ is even.
Since in this case orbits of type 1 and type 2 are of the same parity,
the number of orbits of type 1 is even by (\ref{eq:orbits}). 
Consequently, the number of orbits of type 3 adjacent to some orbit of length $2$
is therefore also even. 
To see that the total number of all orbits of $\la a\ra$ is even,
it suffices to show that
the number of orbits of type 3 which are only adjacent
to orbits of type 3 is even, too. 
We do this by modifying the quotient graph $X_a$
with the removal of all orbits of length $2$, and edges incident with them.
We obtain a graph which is a subdivision of a cubic graph.
Such a graph must have an even number of vertices of valency $3$ 
meaning that the number of orbits of type 3 not adjacent to orbits
of length $2$ was even to start with.
In summary, the number of orbits for each of the three types is even.
It follows that $a$ is an even automorphism. 

Suppose now that $n$ is odd.  
Then there is an odd number of orbits of type 1,
and hence also an odd number of orbits of type 3
adjacent to these orbits. Recall that by (\ref{eq:orbits})
we have an even number of orbits
of type 2, and consequently an even number of orbits of type 3 as their neighbors.
As in the case $n$ even, we remove all orbits of length $2$ 
(an odd number of orbits) from $X_a$. 
We obtain a graph which is a subdivision of a cubic graph in which
there is an odd number of vertices of valency $2$ and an even
number of vertices of valency $3$. Therefore $\la a\ra $
has an odd number of orbits of length $2$ and an odd number of orbits of
length $4$, in total an even number of orbits. This implies that $a$ is
an even automorphism, completing the proof of Proposition~\ref{pro:cubic-2-2}.
\end{proof}

The next proposition shows that odd automorphisms exist
in cubic symmetric graphs of type $\{4^2\}$ if and only
if the order of the graph is congruent to $2$ modulo $4$.

\begin{proposition}
\label{pro:cubic-4-2x}  
Let $X$ be a cubic symmetric graph of order $2n$
and of type $\{4^2\}$.
Then $X$ has odd automorphisms if and only if $n$ is odd. 
%{\color{red} Ali obstajajo taki grafi? Najmanjsi znan primer reda $3^10 x 468$.}
\end{proposition}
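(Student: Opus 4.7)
Following the template of Proposition~\ref{pro:cubic-2-2}, the first step is to invoke Proposition~\ref{pro:girth} to record that $X$ has girth greater than $9$, and then use the presentation~(\ref{G42}) to reduce the parity question to a single generator. Specifically, $\Aut(X)$ is generated by $h$ of order $3$, together with $a$ (satisfying $a^2 = p$) and the involutions $p,q,r$. Since $h$ is of odd order it is an even permutation; $p = a^2$ is a square, hence even; and $q = h^{-1}ph$ and $r = a^{-1}qa$ are conjugates of $p$, so are also even. Thus $\Aut(X)$ contains an odd automorphism if and only if $a$ is odd, and the proposition reduces to the claim $\mathrm{sgn}(a) = (-1)^n$.

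The key new feature compared to the $\{2^2\}$ case is that the vertex stabilizer $S_4$ contains elements of order $4$, so the edge-reversing generator $a$ may have fixed vertices. Let $f_1, f_2, f_4$ denote the numbers of $\la a\ra$-orbits of lengths $1, 2, 4$, so that $f_1 + 2f_2 + 4f_4 = 2n$ and $\mathrm{sgn}(a) = (-1)^{f_2+f_4}$. Every $\la a\ra$-orbit of length at most $2$ lies in $\Fix(p) = \Fix(a^2)$, which by Proposition~\ref{pro:cell24} is a disjoint union of $H$-trees, so $|\Fix(p)| = 6k$ for some $k \ge 0$ and $f_1 + 2 f_2 = 6k$. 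By Proposition~\ref{pro:cell3}, the rigid cells of $a$ itself are $I$-trees, forcing $f_1$ to be even. A short computation then gives $f_2 + f_4 = (n + 3k - f_1)/2$, so the target becomes to show $n + 3k - f_1 \equiv 2n \pmod{4}$.

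To establish this parity identity, the plan is to analyze the quotient graph $X_a$, classifying its vertices (the $\la a\ra$-orbits) into four types: length-$1$ fixed orbits (which by Proposition~\ref{pro:cell3} are paired adjacently into $I$-tree rigid cells of $a$), length-$2$ orbits with an inner edge, length-$2$ orbits without an inner edge, and length-$4$ orbits (which by the girth bound carry no inner edge). Using girth greater than $9$ together with Propositions~\ref{pro:cell1}, \ref{pro:cell24}, and~\ref{pro:cell3}, one rules out the forbidden local adjacency patterns between these four orbit types, obtaining a short list of admissible local configurations in $X_a$ in the spirit of Figure~\ref{fig:22}. Then, removing from $X_a$ all short orbits (lengths $1$ and $2$) yields a subdivision of a cubic multigraph, whose vertex set is automatically of even cardinality; tracking the parities of the four orbit-type counts through this simplification is intended to yield the desired $f_2 + f_4 \equiv n \pmod{2}$.

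The main obstacle is the bookkeeping introduced by the new orbit type of $a$-fixed adjacent pairs, which has no analogue in the proof of Proposition~\ref{pro:cubic-2-2}: one has to show that $I$-tree rigid cells of $a$ can only attach in $X_a$ to orbits of lengths compatible with the parity argument, and that their aggregate contribution balances out so that $n + 3k - f_1$ collapses modulo $4$ to $2n$. A secondary, easier point is that the absence of a $3$-regular subgroup in $\Aut(X)$ forces any index-$2$ subgroup of even permutations to be intransitive and hence to define a bipartition of $X$; consequently every non-bipartite graph of type $\{4^2\}$ admits only even automorphisms, which both handles the non-bipartite case directly and serves as a sanity check for the $a$-parity computation outlined above.
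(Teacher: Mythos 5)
Your reduction is sound and matches the paper's: by~(\ref{G42}) the generators $h$, $p=a^2$, $q$, $r$ are all even, so everything hinges on $\mathrm{sgn}(a)=(-1)^n$; your bookkeeping $f_1+2f_2+4f_4=2n$, $f_1+2f_2=|\Fix(a^2)|=6k$ and $\mathrm{sgn}(a)=(-1)^{f_2+f_4}$ is correct, and (modulo the question of whether $f_1=0$) the target is equivalent to showing that the number $f_4$ of length-$4$ orbits is even. But that is exactly where the proof has to happen, and your proposal stops at announcing ``the plan is to analyze the quotient graph $X_a$\dots obtaining a short list of admissible local configurations\dots intended to yield the desired parity.'' The paper's proof consists precisely of deriving that list: a length-$2$ orbit with an inner edge cannot neighbour a length-$4$ orbit (else $a^2$ would have an $I$-tree rigid cell, against Proposition~\ref{pro:cell24}) and so is flanked by two edgeless length-$2$ orbits whose outer neighbours are length-$4$ orbits; the edgeless length-$2$ orbits are forced into the two configurations of Figure~\ref{fig:42}, so the number of adjacencies between length-$2$ and length-$4$ orbits is even; and deleting all length-$2$ orbits from $X_a$ leaves a subdivision of a cubic graph, whence the number of length-$4$ orbits is even. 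None of these implications is carried out in your text, so the argument is not yet a proof. (Your closing remark that intransitivity of the even-permutation subgroup ``handles the non-bipartite case directly'' is also only half of what is needed: it shows a non-bipartite $X$ has no odd automorphisms, but the proposition then forces $n$ to be even for such $X$, and that again requires the main parity computation.)

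On the positive side, you have put your finger on a point the paper passes over silently: since the vertex stabilizer is $S_4$, the order-$4$ generator $a$ could in principle fix vertices, and the paper's orbit census (lengths $2$ and $4$ only) tacitly assumes $f_1=0$. Your framework $f_2+f_4=(n+3k-f_1)/2$ with $f_1$ even (via Proposition~\ref{pro:cell3}) is the right way to absorb this, but you explicitly leave open how the $I$-tree rigid cells of $a$ sit inside $X_a$ and whether their contribution cancels modulo $4$; note that if $a$ fixes an adjacent pair $w,w'$ then, since $a$ preserves $\Fix(a^2)$ and must preserve degrees in $X[\Fix(a^2)]$, the pair $\{w,w'\}$ is the central edge of one of the $H$-trees of $\Fix(a^2)$, which is the kind of structural fact your ``bookkeeping'' would need. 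Until the local-configuration analysis is actually performed --- with or without the $f_1$ refinement --- the claim $f_4\equiv 0\pmod 2$, and hence the proposition, remains unproved.
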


\begin{proof}
As in the proof of Proposition~\ref{pro:cubic-2-2} we note that, 
by Proposition~\ref{pro:girth}, 
\begin{equation}
\label{girth1}
\textrm{$X$ is of girth greater than $9$.}
\end{equation}
For the same reasons as in the proof of Proposition~\ref{pro:cubic-2-2}
we may assume that $X$ is bipartite.
By (\ref{G42}) $\Aut(X)$ is generated by an element $h$ of order $3$,
an edge-flipping automorphism $a$ of order $4$,
and three conjugate involutions $p$, $q$, and $r$
(of which $p$ is a square of $a$).
Clearly  Proposition~\ref{pro:cubic-4-2x} will be proved provided we 
show that $a$ is an odd automorphism if and only if $n$ is odd.
For this purpose we now count the orbits of $\la a\ra$.
In doing so we use an approach analogous to that used in the proof of 
Proposition~\ref{pro:cubic-2-2}.
 
The orbits of $\la a\ra$ are of three types:
orbits of length $2$ with an inner edge (type 1),
orbits of length $2$ without inner edges (type 2),
and orbits of length $4$ (type 3). The latter have no inner edges as an edge
would either give rise to a $4$-cycle in $X$ 
(which is impossible by (\ref{girth1})) 
or would give rise to an edge flipped by the
involution $a^2$, contradicting non-existence of edge-flipping involutions in 
cubic symmetric graphs of type $\{4^2\}$.
As in the proof of Proposition~\ref{pro:cubic-2-2} we 
observe that the numbers of orbits of type 1 and type 2 
are of the same parity when the bipartition sets are of even cardinality 
(that is, for $n$ even), and are of different parity when the bipartition sets are of 
odd cardinality (that is, for $n$ odd).

Let us now analyze the quotient graph $X_a$ of $X$ 
with respect to the set of orbits of $\la a\ra$ and the possible adjacencies between the
orbits of the three types. First we show that an orbit of type 1 must be adjacent to
two orbits of type 2. It certainly cannot be adjacent to an orbit of type 3, for
otherwise the involution $a^2$ would have an $I$-tree as a rigid cell,
contradicting Proposition~\ref{pro:cell24}.
Further, an orbit of type 1 cannot be adjacent to another orbit of type 1,
for this would give rise to a cycle of length $4$ in $X$, contradicting (\ref{girth1}).
Hence, an orbit of type 1 is adjacent to two orbits of type 2.
The remaining two neighbors in $X_a$
of these two orbits of type 2 must be orbits of type 3. 
Namely, if one of these remaining neighbors
was an orbit of type 2 then $a^2$ would fix a 
path of length $4$ in $X$, which is clearly not possible in a $4$-regular graph. 

Consider now orbits of type 2.
If such an orbit is adjacent to three orbits of type $2$ and 
if each of these three orbits has an orbit of type 3 as a neighbor
then $a^2$ has the $Y$-tree for a rigid cell, 
contradicting Proposition~\ref{pro:cell24}. 
Therefore at least one of these 
three orbits is adjacent to two additional orbits of length $2$
(which must be of type 2, of course).
In fact, precisely one of these three orbits has such neighbors, as otherwise  
$a^2$ would fix a path of length $4$ in $X$, contradicting $4$-regularity of $X$. 
These six orbits of type 2 give rise to an $H$-tree in $X_a$.
The remaining neighbors of the four vertices of valency $1$ in this $H$-tree
must all be orbits of type 3, for otherwise $a^2$ would fix a path of length 
greater than or equal to $4$.
It follows that there are only two possible local adjacency structures of 
orbits of length $2$ in $X_a$ -- those shown in Figure~\ref{fig:42}.
Consequently, the number of adjacencies between orbits of type 3
with orbits of type 2 is even (and there are no adjacencies between orbits of type 1
and orbits of type 3).

\begin{figure}[h!]
\begin{center}
\includegraphics[width=80mm]{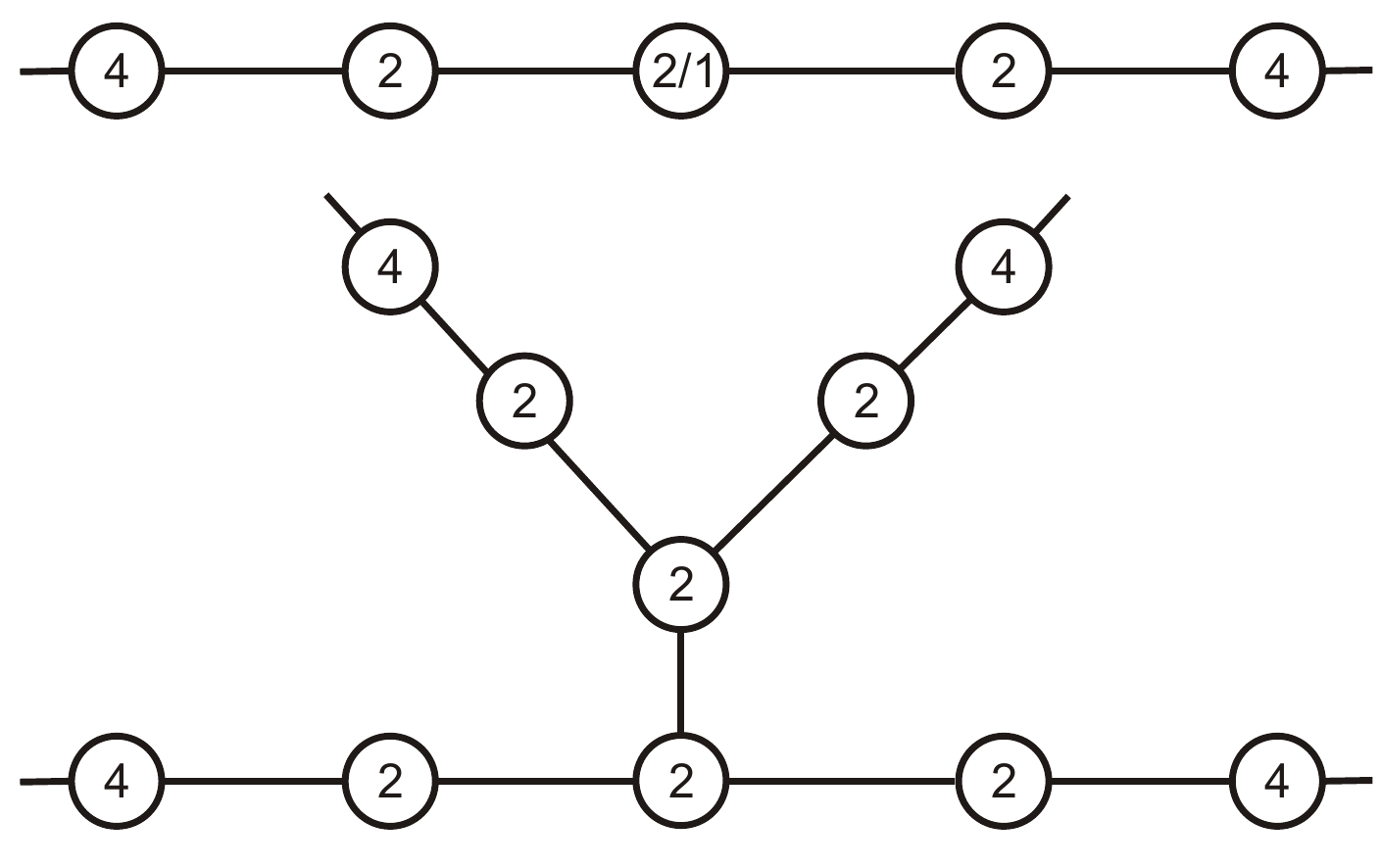}
\caption{\label{fig:42} {\footnotesize Possible local structures of orbits of length $2$ in $X_a$.}}
\end{center}
\end{figure}

%Note that a vertex in an orbit of $\la a\ra$ of length $4$ cannot have
%two neighbors in an adjacent orbit of length $4$ (namely $X$ is of girth $>9$).

Consider now orbits of type 3. 
Recall that there are no edges inside an orbit of type 3.
Further,  by (\ref{girth1}), two adjacent orbits of type 3 must be
 joined by a single matching.
We can conclude that an orbit of
type 3 is either adjacent to three orbits of type 3
in $X_a$ or to two orbits of type 3 and one orbit of type 2. 

We will now modify the quotient graph $X_a$ by  removing all orbits 
of length $2$, obtaining thus a subdivision of a cubic graph. 
Of course the number of vertices of valency $3$ in this graph 
is even. Also,
we know that the number of vertices of valency $2$ (corresponding to
orbits of type 3 with one neighbor being an orbit of type 2) is even.
Consequently, the number of orbits of type 3 is even.
Since we know that the number of orbits of length $2$
is even if and only if $n$ is even, it follows that it is the parity of $n$ that determines
whether the number of orbits of $a$ is even or odd.
Consequently,
$a$ is an odd permutation if and only if $n$ is odd.
\end{proof}

\begin{remark}\label{remark}{\rm
There are no graphs of type $\{4^2\}$ in Conder's list of all cubic 
symmetric graphs up to order $10.000$ \cite{CD}. However,
such graphs do exist \cite{CN07}. Known examples in the literature
are of order ${0\pmod 4}$. Marston D.~E. Conder
has kindly pointed out to us that there exists
a graph of order $5314410 = 2^{1}\cdot 3^{12}\cdot 5^{1}\equiv {2\pmod 4}$.
In particular, the group $G_4^2$ has a quotient $Q$ of order 
$2^{4}\cdot 3^{17}\cdot 5^{1}$
that is an extension of an elementary abelian normal $3$-subgroup of order $3^{15}$ 
by $M_{10}$ (the point stabilizer in the Mathieu group $M_{11}$).  The group $Q$ 
act $4$-regularly on a cubic $5$-arc-transitive graph $Y$ (of order $430467210$), 
but it has two normal subgroups of order $3^4$ that are interchanged under 
conjugation by an element of the full automorphism group of $Y$ (not lying in $Q$). 
Factoring out one of these normal subgroups gives a quotient $P$ of order 
$2^{4}\cdot 3^{13}\cdot 5^{1} = 127545840$, 
which  is the full automorphism group of a 
cubic $4$-regular graph  of type $\{4^2\}$ of order 
$127545840/24 = 5314410$. 
This graph is a regular $\ZZ_3^{11}$-cover of 
Tutte's $8$-cage $\textrm{F}030\textrm{A}$.
By Proposition~\ref{pro:cubic-4-2x} 
it admits odd automorphisms.} 
\end{remark}

The following proposition considers all remaining types of cubic $s$-regular graphs,
$s\ge 2$, without $(s-1)$-regular subgroups in their automorphism groups.
It shows, for example, that amongst the graphs $\mathrm{F}110\mathrm{A}$, $\mathrm{F}182\mathrm{D}$ 
and $\mathrm{F}506\mathrm{A}$,
the three smallest examples of cubic symmetric graphs of type $\{3\}$, only 
the graph $\mathrm{F}506\mathrm{A}$ has no odd automorphisms. Namely, all these three graphs are of order
${2\pmod 4}$, but $\mathrm{F}506\mathrm{A}$ is not bipartite whereas the other two graphs are bipartite.
 
\begin{proposition}
\label{pro:cubic-s-regular-without-s-1}  
Let $X$ be a cubic symmetric graph of order $2n$ 
admitting an $s$-regular subgroup $G\le\Aut(X)$,
which is an epimorphic image of a group different from  
$G_2^2$ and $G_4^2$, 
with no $(s-1)$-regular subgroup, where $s\ge 2$.
Then there exists an odd automorphism of $X$ in $G$  if and only if 
$n$ is odd and $X$ is bipartite.
% or $G$ is  an epimorphic image of  $G_4^2$. 
\end{proposition}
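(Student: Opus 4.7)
My plan is to establish the result in three steps. First, I will show that existence of an odd automorphism in $G$ forces $X$ to be bipartite. Second, for bipartite $X$ I will compute the sign of the edge-flipping involution $a$ from the standard presentation of $G$. Third, I will combine these to pin down exactly when odd automorphisms exist.

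For the first step, assume $G$ contains an odd automorphism and set $K := G\cap \mathrm{Alt}(V(X))$, an index-$2$ normal subgroup of $G$. I claim $K$ must be intransitive on $V(X)$. Indeed, if $K$ were transitive, then orbit--stabilizer would yield $|K_v|=|G_v|/2=3\cdot 2^{s-2}$. The natural action $G_v\to\mathrm{Sym}(N(v))$ has image $S_3$ and kernel of order $2^{s-2}$ (respectively trivial, $\ZZ_2$, the Klein four-group, and the Klein four-group times $\ZZ_2$, for $s=2,3,4,5$). A short inspection of the possible index-$2$ subgroups of $G_v$, which is isomorphic to one of $S_3$, $S_3\times\ZZ_2$, $S_4$, or $S_4\times\ZZ_2$, shows that $K_v$ always projects onto a transitive subgroup of $S_3$ (namely $A_3$ or $S_3$). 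Consequently $K$ is arc-transitive with vertex stabilizer of order $3\cdot 2^{(s-1)-1}$, making it an $(s-1)$-regular subgroup of $\Aut(X)$ and contradicting the hypothesis. Hence $K$ has two orbits of size $n$, and standard arguments (using connectedness of $X$ together with transitivity of $G_v$ on $N(v)$) show these orbits form a bipartition of $X$.

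For the remaining steps, assume $X$ is bipartite with parts $A$ and $B$. The involution $a$ appearing in each of the presentations (\ref{G21}), (\ref{G3}), (\ref{G41}), (\ref{G5}) interchanges the endpoints of a distinguished edge, so it sends some vertex of $A$ to a vertex of $B$; by connectedness $a$ must swap the two bipartition classes globally. Hence $a$ is fixed-point-free, acts as $n$ disjoint transpositions across the bipartition, and has sign $(-1)^n$. Writing $G^+\le G$ for the bipartition-preserving subgroup of index $2$, every element of the coset $aG^+$ shares this sign. If $n$ is odd, then $a$ itself is odd, yielding sufficiency. If $n$ is even, then $aG^+$ consists entirely of even permutations; should $G$ contain an odd element, the subgroup $K$ of even permutations in $G$ would have index $2$ and contain $aG^+$, and equality of cardinalities would force $aG^+=K$ as sets. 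This is impossible because $1\in K$ while $1\notin aG^+$, completing the argument.

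The main obstacle is the case analysis in the first step. For $s\in\{2,4\}$ the vertex stabilizer $G_v$ has a unique index-$2$ subgroup and the verification that it projects transitively onto $S_3$ is immediate, but for $s\in\{3,5\}$ one must check three index-$2$ subgroups and confirm that each projects onto a transitive subgroup of $S_3$. The exclusion of $G_2^2$ and $G_4^2$ in the hypothesis is exactly what keeps the sign argument clean: in those groups the generator corresponding to $a$ has order $4$ rather than $2$, so there is no edge-flipping involution available to play the role of $a$ in the second step.
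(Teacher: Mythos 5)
Your Steps 1 and 2 are sound and track the paper's own argument closely (the paper reaches the intransitivity of $K=G\cap\mathrm{Alt}(V(X))$ more quickly by noting that the order-$3$ element of $G_v$, having odd order, is automatically an even permutation and hence lies in $K_v$, which already makes a transitive $K$ arc-transitive and therefore $(s-1)$-regular; your inspection of the index-$2$ subgroups of $G_v$ arrives at the same point). The gap is in Step 3, in the sentence ``every element of the coset $aG^+$ shares this sign.'' This does not follow from anything you have established, and as a general statement about bipartition-swapping automorphisms of a connected bipartite graph on $2n$ vertices it is false: such a permutation has all of its cycles of even length, so its sign is $(-1)^c$ where $c$ is its number of cycles, and $c$ need not equal $n$ (a swapping element with one $4$-cycle and $n-2$ transpositions has sign $(-1)^{n-1}$, the opposite of $\mathrm{sgn}(a)$). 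Since your entire ``$n$ even $\Rightarrow$ no odd automorphism'' direction rests on $aG^+$ consisting of even permutations, that half of the equivalence is not proved as written.

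The repair is short and uses only what you already have. Assume $G$ contains an odd element and (by Step 1) that $X$ is bipartite with the two bipartition classes being the orbits of $K=G\cap\mathrm{Alt}(V(X))$. Then $K\le G^+$, and since both subgroups have index $2$ in $G$ they coincide. Hence the odd elements of $G$ are precisely the swapping elements $aG^+=aK$; in particular $a$ itself is odd, and $\mathrm{sgn}(a)=(-1)^n$ forces $n$ to be odd. This replaces the cardinality argument and is essentially how the paper concludes. Your closing remark about $G_2^2$ and $G_4^2$ is correct: their exclusion is exactly what guarantees the edge-flipping involution $a$ on which the sign computation depends.
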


\begin{proof}
Assume that there exists an odd automorphism of $X$ in $G$.
Then there exists a subgroup $K\le G$ of index $2$ consisting of even automorphisms.
Since an element of order $3$ from $G_v$ lies in $K$ and there is no $(s-1)$-regular
subgroup of $G$ we can conclude that $K$ is intransitive with two orbits on $V(X)$,
forcing $X$ to be bipartite.
Moreover, odd automorphisms in $G$ are precisely 
those automorphisms that interchange the bipartition sets $X$.
Since $G$ is not an epimorphic image of either $G_2^2$ or  $G_4^2$ there
exists an involution $a\in G\setminus K$ flipping an edge in $X$. 
This involution $a$ is clearly semiregular, and thus it 
is odd if and only if $n$ is odd.

Conversely, if $X$ is bipartite and $n$ is odd then
we can reverse the argument of the paragraph above 
to conclude that the involution $a$ flipping an edge is an odd automorphism.
\end{proof}

In what follows,
Propositions~\ref{pro:cubic-2-regular-2mod4} 
and~\ref{pro:cubic-2-regular-0mod4} combined together lay the ground 
for the proof of Theorem~\ref{thm:main} 
for cubic symmetric graphs of type $\{1,2^1\}$,
while Proposition~\ref{pro:Janko} recalls a group-theoretic result
needed in the proof of 
Proposition~\ref{pro:cubic-2-regular-0mod4}.

\begin{proposition}  
\label{pro:cubic-2-regular-2mod4}
Let $X$ be a cubic symmetric graph of order $2n$, where $n$ is odd, and 
of type $\{1,2^1\}$.
Then $X$ has odd automorphisms. 
\end{proposition}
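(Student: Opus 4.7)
The plan is to reduce this statement directly to Proposition~\ref{pro:cubic-1-regular} via the structural definition of type $\{1,2^1\}$. By the type classification summarized in Subsection~\ref{sec:cubic-sym} (and in Table~\ref{tab:cubic-types}), a cubic symmetric graph $X$ of type $\{1,2^1\}$ has a $2$-regular automorphism group that is a quotient of $G_2^1$ and, crucially, \emph{also} admits a $1$-regular subgroup $G\le\Aut(X)$. This is the only feature of the type we need.

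First I would invoke Proposition~\ref{pro:cubic-1-regular} with this $1$-regular subgroup $G$: it asserts that $G$ contains an odd automorphism of $X$ if and only if $n$ is odd. By hypothesis $n$ is odd, so $G$ (and hence $\Aut(X)$) contains an odd automorphism, which is exactly the desired conclusion.

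There is essentially no obstacle here: the proposition is almost a corollary of Proposition~\ref{pro:cubic-1-regular}, the only non-trivial input being the fact, already recorded in Subsection~\ref{sec:cubic-sym} (and used implicitly throughout the paper), that the presence of ``$1$'' in the type label means a $1$-regular subgroup of $\Aut(X)$ actually exists. The real work, concerning cubic symmetric graphs of type $\{1,2^1\}$ of order $2n$ with $n$ even, is postponed to the companion Proposition~\ref{pro:cubic-2-regular-0mod4}, where the edge-reversing involution in a $1$-regular subgroup is no longer automatically odd and one must instead examine the $\{1,2^1\}$-structure more finely (via the cyclic Sylow $2$-subgroup / $(2t+1)$-Cayley criterion recorded in Theorem~\ref{thm:main}(iii)).
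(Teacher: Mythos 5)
Your proposal is correct and coincides with the paper's own proof: the paper likewise extracts a $1$-regular subgroup from the type $\{1,2^1\}$ designation and applies Proposition~\ref{pro:cubic-1-regular} with $n$ odd. Nothing further is needed.
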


\begin{proof}
Since $X$ is of type $\{1,2^1\}$ there exists  a $1$-regular subgroup $K$ 
of $\Aut(X)$. By Proposition~\ref{pro:cubic-1-regular}, $K$ contains 
odd automorphisms, and the result follows.
\end{proof}

Given a group $G$ and a subset $S \subseteq G$ we let $N_G(S)$ and $C_G(S)$ 
be the normalizer and the centralizer of $S$ in $G$, respectively.
Recall that in a transitive permutation group $G$ acting on a set $V$, 
the number of points left fixed by $G_v\le G$, $v\in V$, 
is equal to the index $|N_G(G_v)\colon G_v|$
of $G_v$ in its normalizer $N_G(G_v)$ (see, for example, \cite[Theorem~3.5]{W67}).

Proposition~\ref{pro:cubic-2-regular-0mod4}
gives the answer to the question about existence of odd automorphisms
in graphs of order $0\pmod{4}$ which admit an epimorphic image of $G_2^1$
as a $2$-regular subgroup in their automorphism groups.
In the proof of this proposition we will use the following Janko's result 
(which can be extracted from \cite[Theorem~1.1]{J01})
about finite $2$-groups having a small centralizer of an involution.  

\begin{proposition}
\label{pro:Janko}
Let $G$ be a finite non-abelian $2$-group containing an involution $t$ such that the centralizer $C_G(t)$ is of the form $\la t\ra \times C$,
where $C$ is a cyclic group of order $2$. Then either $G$ is a dihedral group of order $2^n$, $n\ge 3$, or $G$
is a quasi-dihedral group $QD_{2^n}$ of order $2^n$, $n\ge 4$.
\end{proposition}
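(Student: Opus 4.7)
The plan is to show that $G$ must be a $2$-group of maximal class and then invoke the classification of such groups.

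Step 1: Center analysis. Since $G$ is a nontrivial $2$-group, $Z(G)\ne 1$, and every central element commutes with $t$, so $Z(G)\le C_G(t)\cong \ZZ_2\times\ZZ_2$. The case $|Z(G)|=4$ would force $t\in Z(G)$ and hence $G=C_G(t)$ of order $4$, making $G$ abelian, a contradiction. So $Z(G)=\la z\ra$ for a unique central involution $z$, and, since $z\in C_G(t)$ while $t\notin Z(G)$, we obtain $C_G(t)=\la t,z\ra$.

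Step 2: Produce a cyclic subgroup of index $2$. The existence of an element with centralizer of order $4$ in $G$ is a well-known sufficient condition for $G$ to be of maximal class (that is, nilpotency class $n-1$ when $|G|=2^n$). I would either invoke this classical characterization directly or prove it by induction on $|G|$, exploiting the fact that for any conjugate $t^g$ of $t$ the product $t\cdot t^g$ satisfies $t(tt^g)t=(tt^g)^{-1}$, so the normal subgroup generated by such products is inverted by $t$. Combined with a standard analysis of the chief factors of $G$ (each of order $2$ under maximal class) this produces a cyclic maximal subgroup $\la r\ra$ of order $2^{n-1}$ in $G$.

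Step 3: Classification. With $\la r\ra$ an index-$2$ cyclic subgroup of $G$, Burnside's classical theorem on $2$-groups possessing a cyclic maximal subgroup asserts that $G$ is isomorphic to one of $\ZZ_{2^n}$, $\ZZ_{2^{n-1}}\times\ZZ_2$, $D_{2^n}$, $Q_{2^n}$, or $QD_{2^n}$. The first two are abelian and thus excluded. In $Q_{2^n}$ the unique involution lies in the center, contradicting $t\notin Z(G)$, so this case is excluded as well. The remaining possibilities, $D_{2^n}$ with $n\ge 3$ and $QD_{2^n}$ with $n\ge 4$, give exactly the desired conclusion; one checks directly that both genuinely realize the hypothesis on $t$, so the characterization is tight.

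The main obstacle is Step 2: without appealing to the external characterization of maximal-class $2$-groups, one has to analyze the commutator structure of $G$ rather carefully, and the delicate point is that the centralizer condition on $t$ does not descend in a transparent way to the quotient $\bar G=G/\la z\ra$ -- one must verify that some preimage in $G$ of a suitable involution of $\bar G$ still has centralizer of order $4$, and must handle the small base cases ($|G|=8$, where $Q_8$ must be ruled out by hand) separately. In practice, as the paper does, one simply extracts the result from Janko's detailed structural theorem on $2$-groups with small involution centralizers, thereby bypassing the technical combinatorics altogether.
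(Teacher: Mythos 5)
The paper offers no proof of this proposition at all: it is imported verbatim from Janko's structure theorem on $2$-groups with small involution centralizers (\cite[Theorem~1.1]{J01}), so any argument you supply is by definition a different route. Your route --- show $G$ has maximal class, extract a cyclic maximal subgroup, and finish with the classification of $2$-groups with a cyclic subgroup of index $2$ --- is the classical one (the statement long predates Janko's 2001 paper) and is sound in outline; Step 1 is correct and complete. Two points need tightening. First, Step 2 is the entire content of the proposition and you leave it as an appeal to the ``well-known'' fact that a $2$-group containing an element with centralizer of order $4$ has maximal class; that fact is indeed standard (it is, e.g., Proposition~1.8 in Berkovich's \emph{Groups of Prime Power Order}), but as written your proof carries no more detail than the paper's citation of Janko --- it merely swaps one external reference for another, which you candidly acknowledge. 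Second, your quotation of Burnside's classification in Step 3 is incomplete: a non-abelian $2$-group of order $2^n$, $n\ge 4$, with a cyclic maximal subgroup can also be the modular group $M_{2^n}=\la a,b\mid a^{2^{n-1}}=b^2=1,\ b^{-1}ab=a^{1+2^{n-2}}\ra$, which is neither dihedral, generalized quaternion, nor quasi-dihedral. It must be listed and excluded; the exclusion is easy (its center $\la a^2\ra$ has order $2^{n-2}\ge 4$, contradicting $|Z(G)|=2$ from your Step 1, and every involution in it has centralizer of order at least $2^{n-1}$), and it is also automatic if Step 2 is genuinely established since $M_{2^n}$ has nilpotency class $2$, but as stated the case analysis has a hole. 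The remaining verifications (ruling out $Q_{2^n}$ via its unique, central involution, and checking that $D_{2^n}$ and $QD_{2^n}$ realize the hypothesis) are correct.

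What each approach buys: citing Janko keeps the paper short and shifts all the work elsewhere; your argument, once Step 2 is either proved or pinned to a precise classical reference and the modular group is added to and removed from the list in Step 3, yields a self-contained and considerably more elementary proof than invoking Janko's much finer structure theory.
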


Given integers $m\ge 1$ and $n\ge 2$, a group of automorphisms of a
graph is called $(m,n)$-{\em semiregular} if it has $m$ orbits of
length $n$ and no other orbit, and the action is regular on each
orbit. An $m$-{\em Cayley graph} $X$ {\em on a group} $H$ is a graph admitting an
$(m,n)$-semiregular subgroup of automorphisms isomorphic to $H$. 

\begin{proposition}  
\label{pro:cubic-2-regular-0mod4}
Let $X$ be a cubic symmetric graph of order $2^km$, where $k\ge 2$ 
and $m\ge 1$ is odd (thus, $X$ is of order ${0\pmod 4}$), 
with a $2$-regular subgroup $G\le \Aut(X)$, which 
is an epimorphic image of $G_2^1$.
Then $G$ contains an odd automorphism if and only if $X$ is 
non-bipartite, of type $\{1,2^1\}$ an $m$-Cayley graph on a cyclic group of order 
$2^k$.
\end{proposition}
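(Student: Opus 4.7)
\emph{Sufficiency.} Suppose $X$ is non-bipartite, of type $\{1,2^1\}$, and an $m$-Cayley graph on a cyclic group $\la c\ra\cong\ZZ_{2^k}$. As $X$ has type $\{1,2^1\}$, $G=\Aut(X)$, so $c\in G$. Since $\la c\ra$ is semiregular with $m$ orbits of length $2^k$, the permutation $c$ decomposes into $m$ disjoint $2^k$-cycles, each of which is an odd permutation; the total sign is therefore $(-1)^m=-1$. Hence $c$ is an odd automorphism in $G$.

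\emph{Necessity.} Assume $G$ contains an odd automorphism and let $\epsilon\colon G\to\{\pm 1\}$ be the (surjective) sign homomorphism; denote by $h,a,p\in G$ the images of the standard $G_2^1$-generators. First I would determine the type. The three involutions of $G_v\cong S_3$ are mutually conjugate in $G_v$, so they share a common sign $\sigma$ under $\epsilon$. If $\sigma=+1$, then $G_v\subset\ker\epsilon$; moreover $\epsilon(a)=+1$ (it equals $(-1)^{2^{k-1}m}=+1$ for $k\geq 2$ when $a$ is semiregular, and $=\sigma=+1$ when $a$ fixes some vertex), whence $\epsilon$ would be trivial, contradicting surjectivity. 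Thus $\sigma=\epsilon(p)=-1$, $H:=\ker\epsilon$ meets $G_v$ exactly in $\la h\ra\cong\ZZ_3$, and $|H|/|H\cap G_v|=2^km=|V(X)|$, so $H$ is transitive; as $h$ cycles the three neighbours of $v$, $H$ is arc-regular, i.e.\ $1$-regular. Consequently $X$ is of type $\{1,2^1\}$ and $G=\Aut(X)$.

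Next, the plan is to apply Janko's theorem (Proposition~\ref{pro:Janko}) to a Sylow $2$-subgroup of $G$. Since $p$ and $a$ commute, the edge-stabilizer $G_e=\la p,a\ra\cong\ZZ_2\times\ZZ_2$ lies in $C_G(p)$. Edge-transitivity of $G$, combined with the fact that each canonical involution is the distinguished involution of exactly $f$ edges (where $f$ is the number of $I$-tree rigid cells of $p$), gives $|C_G(p)|=4f$; and $\epsilon(p)=(-1)^{2^{k-1}m-f}=-1$ forces $f$ to be odd (using $k\geq 2$), so the $2$-part of $|C_G(p)|$ is exactly $4$. Choose a Sylow $2$-subgroup $P$ of $G$ containing $\la p,a\ra$; then $|P|=2^{k+1}\geq 8$, and $C_P(p)$, being a $2$-group contained in $C_G(p)$ and containing $\la p,a\ra$, equals $\la p,a\ra\cong\ZZ_2\times\ZZ_2$. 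Thus $p$ is non-central in $P$, $P$ is non-abelian, and Proposition~\ref{pro:Janko} forces $P\cong D_{2^{k+1}}$ or $QD_{2^{k+1}}$; in either case $P$ contains a unique cyclic subgroup $\la r\ra$ of index $2$ (of order $2^k$), with central involution $z=r^{2^{k-1}}$.

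Finally I would establish the $m$-Cayley structure and rule out bipartiteness. Because $z\in Z(P)$, $|C_G(z)|\geq|P|=2^{k+1}\geq 8$, whereas any $G$-conjugate of $p$ has centralizer of order $4f$ with $2$-part $4$; hence $z$ is not conjugate to $p$. Since every involution of $G$ with a fixed vertex lies in some $G_v\cong S_3$ and is therefore conjugate to $p$, $z$ has no fixed vertex, which forces $\la r\ra\cap G_v=1$ for all $v$; so $\la r\ra$ is semiregular with $m$ orbits of length $2^k$, exhibiting $X$ as an $m$-Cayley graph on $\ZZ_{2^k}$. Assume now, for contradiction, that $X$ is bipartite with bipartite-parity homomorphism $\beta\colon G\to\{\pm 1\}$. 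Then $\beta(a)=-1$ (edge-flip) and $\beta(p)=+1$, so $\beta(z)=-1$ whether $z=a$ or $z=ap$. Since $m$ is odd and $|V_i|=2^{k-1}m$, no $\la r\ra$-orbit can be monochromatic (which would force $|V_i|$ to be a multiple of $2^k$), so $\beta(r)=-1$; this yields $\beta(z)=\beta(r)^{2^{k-1}}=(-1)^{2^{k-1}}=+1$ for $k\geq 2$, a contradiction. The main obstacle is the centralizer analysis: establishing $|C_G(p)|=4f$ with $f$ odd (so that its $2$-part is exactly $4$) is what makes Janko's theorem applicable and thereby unlocks the cyclic semiregular subgroup $\la r\ra$ on which the remaining parity arguments all rest.
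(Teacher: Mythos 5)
Your proof is essentially correct and, at its core, follows the same route as the paper: force the vertex-fixing involution $p$ to be the odd one, show $|C_G(p)|=4f$ with $f$ odd so that the $2$-part of the centralizer is exactly $4$, deduce $C_P(p)\cong\ZZ_2\times\ZZ_2$ for a Sylow $2$-subgroup $P$, invoke Proposition~\ref{pro:Janko} to get $P$ dihedral or quasi-dihedral, and extract from its cyclic index-$2$ subgroup a semiregular $\ZZ_{2^k}$ giving the $m$-Cayley structure. Two sub-arguments genuinely differ and are worth noting. First, you obtain the type by exhibiting $\ker\epsilon$ as a $1$-regular subgroup directly from the sign analysis, whereas the paper deduces the type only at the end from Proposition~\ref{pro:cubic-s-regular-without-s-1} together with Table~\ref{tab:cubic-types}; your route is more self-contained. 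Second, your non-bipartiteness argument evaluates the bipartition character $\beta$ at $z=r^{2^{k-1}}$ in two ways ($\beta(z)=-1$ because $z\in\{a,ap\}$, versus $\beta(z)=\beta(r)^{2^{k-1}}=+1$ because no $\la r\ra$-orbit can be monochromatic), while the paper instead counts the length-$2$ orbits of the odd involution inside the two parts; both are valid, and yours reuses the Sylow structure already in hand. One point needs repair: right after producing the $1$-regular subgroup you assert ``consequently $X$ is of type $\{1,2^1\}$ and $G=\Aut(X)$.'' Having both a $1$-regular subgroup and a $2$-regular subgroup of kind $2^1$ only narrows the type to $\{1,2^1\}$ or $\{1,2^1,2^2,3\}$; the latter is ruled out solely because such graphs are always bipartite, which you establish only at the very end. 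Since nothing in between actually uses $G=\Aut(X)$, the fix is simply to defer the type claim (and the equality $G=\Aut(X)$) until after the bipartiteness contradiction.
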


\begin{proof}
Observe that with the assumption on $k$ and $m$, clearly an 
$m$-Cayley graph on a cyclic group of order $2^k$ admits odd automorphisms:
such an automorphism is a generator of the cyclic group of order $2^k$.

To complete the proof of Proposition~\ref{pro:cubic-2-regular-0mod4}
assume that $X$ has an odd automorphism in $G$. 
Since $G$ is $2$-regular we have $|G|=3\cdot 2^{k+1} m$.
By (\ref{G21}), $G$ is generated by an element of order $3$, 
which cannot be an odd automorphism, and by two involutions $a$ and $p$. 
Therefore $X$ has an odd automorphism in $G$ if and only if 
at least one of these two involutions 
is an odd automorphism.
Since $X$ is of order ${0\pmod 4}$, an involution is odd if and only if it fixes an odd 
number of pairs of adjacent vertices.
Hence, let  $t\in G$ be an odd involution. Then 
$|N_G(\la t\ra)\colon \la t\ra|=|C_G(t)\colon \la t\ra|=2m'$, where $m'\ge 1$ is odd,
implying that $|C_G(t)|=4m'$. 
Let $P$ be a Sylow $2$-subgroup of $G$ containing $t$.
Note that $|P|=2^{k+1}\ge 8$.
Since $t\in C_P(t)\le C_G(t)$ is an involution it cannot be odd inside
a cyclic subgroup of order $4$. It follows that 
$C_P(t)\cong\la t\ra\times \ZZ_2$. 
If $P$ is abelian then $P\le C_G(t)$, implying that $P=C_P(t)\cong\la t\ra\times \ZZ_2$, 
and thus $|P|=4$, contradicting the assumption
that $|P|=2^{k+1}\ge 8$.
Therefore, $P$ is non-abelian, and so Proposition~\ref{pro:Janko} implies that $P$ 
contains a cyclic subgroup $C$ of order $2^{k}$.
Since $C_P(t)$ is isomorphic to $\ZZ_2\times\ZZ_2$ 
it follows that $t\in P\setminus C$.
In particular, since $t\in P\cap G_v$, $v\in V(X)$ and $|G_v|=6$, 
a generator of $C$ is semiregular on $V(X)$.
It follows that $X$ is an $m$-Cayley graph on $C$. 
 
To complete the proof we need to show that 
the existence of odd automorphisms in $X$ implies that
$X$ is a non-bipartite graph of type $\{1,2^1\}$.
Suppose on the contrary that $X$ is bipartite. Then
the odd involution $t\in G$, which is contained in some vertex stabilizer, 
fixes the two bipartition sets of $X$. Therefore any orbit %$\cal{O}$
of $\la t \ra$  is contained in one of the two bipartition sets.
Further, $\textrm{Fix}(t)$ splits equally into the two bipartition sets.
Namely, every vertex fixed by $t$ in one bipartition set has a neighbor in the
other bipartition set which is also fixed by $t$.
In view of the fact that the two bipartition sets are of equal cardinality,
this implies that the two bipartition sets contain the same number of orbits of $\la t\ra$
of length $2$ also, implying that the total number of orbits of length $2$ is even,
contradicting the fact that $t$ is an odd automorphism. 
Therefore $X$ is non-bipartite. Finally,   
Proposition~\ref{pro:cubic-s-regular-without-s-1} and the fact that graphs  
of types $\{1,2^1,2^2,3\}$ and $\{2^1,2^2,3\}$ are all bipartite 
(see Table~\ref{tab:cubic-types}) combined together
imply  that $X$ is of type $\{1,2^1\}$.
\end{proof}

The next proposition deals with graphs  of 
types $\{2^1,3\}$, $\{2^2,3\}$, $\{2^1, 2^2,3\}$
and $\{1, 2^1, 2^2,3\}$.

\begin{proposition}
\label{pro:3-2-odd}  
Let $X$ be a cubic symmetric $3$-regular graph of order $2n$
admitting  a $2$-regular subgroup $K$ 
in its full automorphism group $\Aut(X)$.  
Then $X$ has odd automorphisms if and only if $n$ is odd. 
\end{proposition}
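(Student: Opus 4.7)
The plan is to reduce the analysis to the single central involution $\tau_v$ of the vertex stabilizer $\Aut(X)_v\cong S_3\times\ZZ_2$, and then use it as a hinge between $K$ and $\Aut(X)\setminus K$. First I would observe that $\tau_v\notin K$: since $K$ is $2$-regular, the stabilizer $K_v\cong S_3$ acts faithfully on $N(v)$ (otherwise a non-trivial element of $K_v$ would fix a $2$-arc at $v$), while $\tau_v$ acts trivially on $N(v)$ because it fixes $Y(v)$ pointwise. Hence $\Aut(X)=K\sqcup K\tau_v$, and the problem splits into understanding the parity of $\tau_v$ and the parities of elements of $K$.

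Next I would compute the parity of $\tau_v$. Since $X$ admits a $2$-regular subgroup it is not of type $\{3\}$, so Proposition~\ref{pro:cell23} forces all $\tau_v$-rigid cells to be of a single type; as $\tau_v$ already has $Y(v)$ as a rigid cell, every rigid cell is a $Y$-tree. Consequently $|\Fix(\tau_v)|=4k$ for some $k\ge 1$, and $\tau_v$ has exactly $n-2k$ transpositions, so $\tau_v$ is odd if and only if $n$ is odd. This immediately handles the ``if'' direction: when $n$ is odd, $\tau_v$ is the desired odd automorphism.

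For the ``only if'' direction, suppose $n$ is even; then $\tau_v$ is even, and since $\Aut(X)=K\sqcup K\tau_v$ it suffices to show $K$ contains only even permutations. Whenever $K$ is (or can be chosen to be) an epimorphic image of $G_2^1$ this follows from the earlier results: Proposition~\ref{pro:cubic-s-regular-without-s-1} applies when $K$ contains no $1$-regular subgroup (covering type $\{2^1,3\}$ and, by choosing the $G_2^1$-image $K$, type $\{2^1,2^2,3\}$), and Proposition~\ref{pro:cubic-2-regular-0mod4} applies otherwise (covering type $\{1,2^1,2^2,3\}$ with $n$ even). In each case the conditions for the existence of odd elements in $K$ force either bipartiteness together with odd $n$, or $X$ to be of type $\{1,2^1\}$, neither of which is compatible with the types under consideration.

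The genuine obstacle is type $\{2^2,3\}$, where $K$ is forced to be a $G_2^2$-image with an order-$4$ generator $\beta$ satisfying $\beta^2=\rho\in K_v$. Here the girth bound from Proposition~\ref{pro:girth} used in Proposition~\ref{pro:cubic-2-2} is unavailable. I would classify $\langle\beta\rangle$-orbits as in the proof of Proposition~\ref{pro:cubic-2-2} into types $1,2,3$ (the absence of fixed vertices of $\beta$ being immediate since $K_v\cong S_3$ has no element of order $4$), and use that $\Fix(\rho)=\Fix(\beta^2)$ consists of the union of the type-$1$ and type-$2$ orbits. Applying Proposition~\ref{pro:cell23} to $\rho$ in the $3$-regular ambient group, all $\rho$-rigid cells are $I$-trees; since a type-$2$ orbit $\{u,\beta(u)\}$ consists of non-adjacent vertices, each of $u$ and $\beta(u)$ must be paired via an edge of $\Fix(\rho)$ with a vertex in another type-$2$ orbit, yielding that $n_2$ is even. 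This, together with $n_1+n_2+2n_3=n$, forces $\rho$ (and hence every involution in $K_v$) to be even and reduces the parity of $\beta$ to that of $n+n_3$. The final step — verifying $n_3$ is even when $n$ is even — is the hard one; I would carry it out by analyzing the quotient graph $X_\beta$ and using that the extra central involution $\tau_v\in\Aut(X)\setminus K$, together with the rigid-cell constraints it imposes, rules out the parity of $n_3$ differing from that of $n$. Once $\beta$ is shown to be even, the sign homomorphism $K\to\ZZ_2$ is trivial (since $K^{ab}$ is a cyclic quotient of $G_2^{2\,\mathrm{ab}}=\ZZ_4=\langle\bar\beta\rangle$), and the proof is complete.
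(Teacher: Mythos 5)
Your ``if'' direction is correct and coincides with the paper's: the central involution of $\Aut(X)_v\cong S_3\times\ZZ_2$ lies outside $K$, has only $Y$-trees as rigid cells by Proposition~\ref{pro:cell23}, hence fixes $4k$ vertices and has $n-2k$ transpositions, so it is odd exactly when $n$ is odd. The coset decomposition $\Aut(X)=K\sqcup K\tau_v$ and the reduction of the sign character of a $G_2^2$-image to the sign of $\beta$ via the abelianization $\ZZ_4$ are also sound, and your use of Propositions~\ref{pro:cubic-s-regular-without-s-1} and~\ref{pro:cubic-2-regular-0mod4} does dispose of the types containing $2^1$.

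However, there is a genuine gap: the type $\{2^2,3\}$ case is not proved. The whole ``only if'' direction for that type reduces, in your setup, to showing that the number $n_3$ of length-$4$ orbits of $\beta$ is even when $n$ is even, and at exactly that point you write that you ``would carry it out'' by analyzing $X_\beta$ together with constraints from $\tau_v$ --- that is a plan, not an argument. Worse, the plan faces a concrete obstacle you yourself flag: the orbit-adjacency analysis of Proposition~\ref{pro:cubic-2-2} leans essentially on the girth bound of Proposition~\ref{pro:girth}, which holds only for types $\{2^2\}$ and $\{4^2\}$; graphs of type $\{2^2,3\}$ may have small girth, so the local-structure classification of orbits in $X_\beta$ (no inner edges in length-$4$ orbits, single matchings between them, etc.) does not carry over. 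The paper avoids this entirely and treats all four types uniformly: since $\Aut(X)$ is a $3$-regular epimorphic image of $G_3$, it contains an edge-flipping \emph{involution} $a$ (note $a^2=1$ in $G_3$, unlike in $G_2^2$); one first shows that when $n$ is even the vertex stabilizers consist of even automorphisms, whence the index-$2$ subgroup of even automorphisms is intransitive and $X$ is bipartite with all part-swapping automorphisms odd --- but $a$ swaps the parts, is fixed-point-free with $n$ orbits of length $2$, and is therefore even, a contradiction. Replacing your case analysis of $K$ by this argument on the full group would close the gap.
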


\begin{proof}
By Proposition~\ref{pro:cell23} there exists 
an involution $\sigma\in \Aut(X)\setminus K$ 
with all of its rigid cells isomorphic to the $Y$-tree. 
Therefore the number of fixed vertices of $\sigma$ is divisible by $4$.
Consequently, $\sigma$ is an odd permutation if and only if $n$ is odd.

To complete the proof let us assume that $n$ is even, and suppose, by contradiction,
that $X$ contains odd automorphisms. 
Suppose first that $X$ contains an odd automorphism in $\Aut(X)_v$, $v\in V(X)$.
This implies that $\Aut(X)_v\setminus K_v$ 
consists of odd automorphisms and that $K_v$
consists of even automorphisms. 
But this contradicts the above statement about involutions in vertex stabilizers
being odd if and only if $n$ is odd.  Consequently, 
odd automorphisms of $X$ can possibly only exist outside vertex
stabilizers.  Let $H$ be the index $2$ subgroup of $\Aut(X)$ consisting
of even automorphisms only.
But note that the whole of $\Aut(X)_v$ is contained in $H$, 
and so $H$ is an index $2$ intransitive subgroup of $\Aut(X)$, 
forcing  $X$ to be bipartite.
Furthermore, all automorphisms interchanging the two bipartition sets 
are odd permutations.
By (\ref{G3}), there exists an involution $a\in \Aut(X)$ flipping an edge of $X$
and thus interchanging the bipartition sets. It follows that $a$ has
$n$ orbits of length $2$ (and no fixed vertices), and so is an even permutation,
a contradiction. 
\end{proof}

The last proposition in the series solves the problem of existence of
odd automorphisms in cubic symmetric graphs of types $\{4^1,5\}$, $\{4^2,5\}$,
$\{4^1, 4^2,5\}$ and $\{1,4^1, 4^2,5\}$. Since cubic symmetric graphs
of types $\{4^1,5\}$ and $\{4^2,5\}$ are never bipartite 
(see Table~\ref{tab:cubic-types}),
Proposition~\ref{pro:5-4-oddx} implies that none of these graphs admits odd 
automorphisms. 
An example of a graph of type $\{4^2,5\}$ is the graph
$\mathrm{F}234\mathrm{B}$,
the only cubic $5$-regular vertex-primitive graph \cite{W67}. 
On the other hand, since cubic symmetric graphs
of types $\{4^1, 4^2,5\}$ and $\{1,4^1, 4^2,5\}$ are always bipartite (see 
Table~\ref{tab:cubic-types}) Proposition~\ref{pro:5-4-oddx} implies
that graphs of these  two types admit  odd 
automorphisms if and only if they are of order twice an odd number. 

\begin{proposition}
\label{pro:5-4-oddx} 
Let $X$ be a cubic symmetric $5$-regular graph of order $2n$
admitting  a $4$-regular subgroup $K$ 
in its full automorphism group $\Aut(X)$.  
Then $X$ has odd automorphisms if and only if 
$X$ is bipartite and $n$ is odd.  
\end{proposition}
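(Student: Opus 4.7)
The plan is to adapt the strategy of Proposition~\ref{pro:3-2-odd} to the $5$-regular setting, replacing the $Y$-tree and $I$-tree rigid cells used there with the $A$-tree and $H$-tree cells provided by Propositions~\ref{pro:cell25} and~\ref{pro:cell24}. For the sufficiency direction, suppose $X$ is bipartite and $n$ is odd. Since $\Aut(X)$ is an epimorphic image of $G_5$, the image of the order-$2$ generator $a$ of~(\ref{G5}) flips an edge of $X$; in a bipartite graph $a$ must interchange the two bipartition classes, so $a$ is semiregular with exactly $n$ orbits of length $2$ and is therefore odd.

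For the necessity direction, suppose $X$ admits an odd automorphism and let $H\le\Aut(X)$ denote the index-$2$ subgroup of even automorphisms. First I handle the bipartite case. An $A$-tree $A(v)$ splits as $7$ to $3$ between the two bipartition classes (center and six distance-$2$ vertices on one side, three neighbors of $v$ on the other), while an $H$-tree $H(u,v)$ splits as $3$ to $3$. For a non-semiregular $\sigma\in\Aut(X)\setminus K$ with $A$-tree cells, write $k_A,k_B$ for the numbers of cells with center in the two classes and $k=k_A+k_B$. Then $|\Fix(\sigma)\cap A|=7k_A+3k_B$, so $n-7k_A-3k_B$ must be even (being twice the number of $\sigma$-$2$-cycles in one class), forcing $n\equiv k\pmod 2$; the total number of $\sigma$-$2$-cycles is $n-5k$, which has the same parity as $n+k$ and is therefore even, making $\sigma$ even. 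An analogous computation for non-semiregular $\tau\in K$ with $H$-tree cells gives $\tau$ even, while semiregular involutions have exactly $n$ $2$-cycles and are odd iff $n$ is odd. Because $\Aut(X)$ is generated by $h$ of order $3$ (automatically even) together with the involutions $a,p,q,r,s$ of~(\ref{G5}), the existence of an odd automorphism in the bipartite case forces $n$ to be odd.

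It remains to exclude the non-bipartite case. Since $H$ is normal of index $2$ and $X$ is non-bipartite, $H$ must act transitively on $V(X)$ (otherwise its two orbits would furnish a bipartition). For type $\{4^1,5\}$ the subgroup $K$ is an epimorphic image of $G_4^1$ with no $3$-regular subgroup (the digit $3$ is absent from the type of $X$), so Proposition~\ref{pro:cubic-s-regular-without-s-1} applied to $K$ yields that $K$ contains no odd automorphism of $X$ in the non-bipartite case; hence $K\subseteq H$, and comparing orders forces $H=K$. Consequently every element of the coset $\Aut(X)\setminus K$ would have to be odd, and the plan to derive a contradiction is to exhibit a canonical involution $\sigma\in\Aut(X)_v\setminus K_v$ whose rigid cells are $A$-trees (by Proposition~\ref{pro:cell25}) and show that it is in fact even, by relating its $A$-tree cell count $k_\sigma$ to the $H$-tree cell count $m_\tau$ of a canonical $\tau\in K$ (whose evenness is guaranteed by Proposition~\ref{pro:cubic-s-regular-without-s-1} applied to $K$) via the containment $H(u,v)\subset A(v)$ for $u\in N(v)$.

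The main obstacle is the type $\{4^2,5\}$ case, in which $K$ is an epimorphic image of $G_4^2$ --- a group explicitly excluded from Proposition~\ref{pro:cubic-s-regular-without-s-1} --- so one cannot use that proposition to conclude $K\subseteq H$. Here the plan is to mimic the orbit-counting argument of Proposition~\ref{pro:cubic-4-2x}: study the orbits of the order-$4$ edge-flipping generator $a$ of $G_4^2$ on $V(X)$ together with the admissible adjacency structures in the quotient $X_a$, so as to compute the parity of $a$ directly and deduce that $K$ still contains no odd automorphism of $X$ when $X$ is non-bipartite. Once $K\subseteq H$ is secured, the endgame is identical to that of the type $\{4^1,5\}$ case.
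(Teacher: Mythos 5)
Your sufficiency argument and your treatment of the bipartite case are sound. In fact, the bipartite half of the necessity direction is handled by a clean bipartition-based parity count (each non-semiregular involution preserves the two colour classes, an $A$-tree splits $7$--$3$ and an $H$-tree splits $3$--$3$ across them, whence $n\equiv k\pmod 2$ and the involution is even), which is a genuinely different and somewhat slicker route than the paper takes for that situation. The reduction $K=H$ in the non-bipartite case is also fine, and can even be done uniformly for both types $\{4^1,5\}$ and $\{4^2,5\}$ without mimicking Proposition~\ref{pro:cubic-4-2x}: an index-$2$ transitive subgroup of the $4$-regular group $K$ would contain the order-$3$ elements of the vertex stabilizers and hence be $3$-regular, which neither type admits, so it is intransitive and yields a bipartition.

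The genuine gap is the endgame of the non-bipartite case. Having forced $K=H$, you must derive a contradiction by exhibiting an \emph{even} element of $\Aut(X)\setminus K$, and the natural candidate is the canonical involution $\sigma=\sigma_v$ with $A$-tree rigid cells. If $\sigma$ has $k$ cells, it has $n-5k$ orbits of length $2$, so you need $k\equiv n\pmod 2$; equivalently, writing $j$ for the number of edges flipped by $\sigma$ and $j'$ for the number of its non-edge $2$-orbits, you need $j+j'$ even. Your proposed mechanism --- relating the $A$-cell count $k_\sigma$ to the $H$-cell count $m_\tau$ of some even $\tau\in K$ via the containment $H(u,v)\subset A(v)$ --- does not produce this: the two counts belong to different automorphisms and the geometric containment of one tree in another gives no congruence between them. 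This is precisely where the paper's proof does its heavy lifting. It shows $j'$ is even by deleting the fixed vertices and their neighbouring $2$-orbits from the quotient $X_\sigma$ and invoking the parity of degree-$3$ vertices in a subdivision of a cubic graph; and it shows $j$ is even by observing that the set $\mathcal{R}$ of $\sigma$-flipped edges is invariant under $\Aut(X)_v$ (as $\sigma$ is central there), that every $\Aut(X)_v$-orbit on $\mathcal{R}$ has length divisible by $3$, and then --- via an analysis of the order-$16$ subgroups of the edge stabilizer $(D_8\times\ZZ_2)\rtimes\ZZ_2$ --- that a length-$3$ orbit is impossible, so every orbit length is divisible by $6$. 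Without an argument of this kind (or some substitute for it), the non-bipartite case remains open in your proposal.
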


\begin{proof}
Suppose first that 
$X$ is bipartite and $n$ is odd. Then
the edge-flipping involution $a\in \Aut(X)$, 
which exists by (\ref{G5}), is semiregular with $n$ orbits of length $2$,
and thus an odd automorphism.

In the proof of the converse statement the following two claims will be needed.
 
\medskip

\noindent
%\begin{claim}
{\sc Claim~1.} {\em There exists a $4$-regular subgroup $M$ of $\Aut(X)$ 
%(either  an epimorphic image $G_4^1$ or of  $G_4^2$) 
such that every element of $M_v$, $v\in V(X)$, is an even automorphism. }
%\end{claim}

\medskip

\noindent 
Since $X$ contains odd automorphisms
there exists an index $2$ subgroup of even automorphisms,
which is either 
arc-transitive and hence an epimorphic image of  $G_4^1$ or $G_4^2$
(in which case the claim is clearly true)
or it is intransitive. If the latter occurs then the whole of $\Aut(X)_v$, $v\in V(X)$, 
consists of even automorphisms, and hence also the whole of $M_v$ 
consists of even automorphisms. 
This proves the above claim. 

\medskip

\noindent
%\begin{claim}
{\sc Claim~2.} {\em The stabilizer $\Aut(X)_v$, $v\in V(X)$,
consists of even automorphisms. }
%\end{claim}

\medskip
 
\noindent
By Proposition~\ref{pro:cell25}, 
for every vertex $v\in V(X)$ there exists 
a unique canonical 
involution $\sigma=\sigma_v$ for $v$
(with $A(v)$ as its rigid cell).
Note also that, by \cite[Lemma~1]{D79}, 
$\sigma_v$ is a central element of $\Aut(X)_v\cong S_4\times \ZZ_2$.
Let $v=w_0$, $w_1$, \ldots, $w_{m-1}$ be those vertices of $X$ for which 
$A(w_i)$, $i\in\ZZ_m$, are all $\sigma$-rigid cells.

Consider the orbits of $\la\sigma\ra$.
Orbits of $\la \sigma\ra$ are of length $1$ and $2$.
Exactly $10m$ orbits of $\la \sigma\ra$ are of length $1$
(each $\sigma$-rigid cell gives $10$ such orbits).
Orbits of $\la \sigma\ra$ of length $2$ are of two types: 
those with an inner edge (we refer to them as orbits of {\em type 1}) 
and those with no inner edge (we refer to them as orbits of {\em type 2}).
Let $j$ be the number of orbits of $\la\sigma\ra$ of type 1 and 
$j'$ be the number of orbits of $\la\sigma\ra$ of type 2.
Then 
\begin{eqnarray}
\label{eq:j}
|V(X)|&=&2j'+2j+10m = 2n. 
\end{eqnarray}
Let us now consider the quotient graph $X_\sigma$ of $X$ 
with respect to the set of orbits of $\la \sigma\ra$.
Let us modify  the quotient graph $X_\sigma$
with the removal of all orbits of length $1$, and orbits of type 1 adjacent
to these orbits (note that there are $6m$ such orbits 
as orbits of length $1$ form an $A$-tree).  
We obtain a graph which is a subdivision of a cubic graph.
Such a graph must have an even number of vertices of valency $3$, 
meaning that $j'-6m$, and thus also $j'$, is an even number.

Suppose first that $j=0$. Then, by (\ref{eq:j}), $m$ and $n$ are of the same parity, implying that  
$\sigma$ has an even number of orbits
of length $2$, and is thus an even automorphism.
Since, by (\ref{G5}), all involutions $p=\sigma$, $q$, $r$ and $s$ 
in a generating set of $\Aut(X)$ are conjugate, we may conclude that $\Aut(X)_v$, $v\in V(X)$,
consists of even automorphisms as claimed.
We may therefore assume that $j\ne 0$. 
(Note that in this case $X$ is non-bipartite in view of the fact that there exist
both a vertex fixed by $\sigma$ and an edge flipped by $\sigma$. 
By Table~\ref{tab:cubic-types}, $X$ is either of type $\{4^2,5\}$ or of type  $\{4^1,5\}$.)
By (\ref{eq:j}) it suffices to show that $j$ is even.

Let $\mathcal{R}$ be the set of $j$ edges flipped by $\sigma$.
Since $\sigma$ is the central element in $\Aut(X)_v\cong S_4\times\ZZ_2$, one can easily see that
$\mathcal{R}^\gamma=\mathcal{R}$ for each $\gamma \in \Aut(X)_v$.
Since $|\Aut(X)_v|=48=3\cdot 2^4$ the length of every orbit of $\Aut(X)_v$ on $\mathcal{R}$ is a divisor of $48$.
Further, since the stabilizer of an edge in $X$ is a $2$-group, the element of order $3$ in $\Aut(X)_v$
cannot fix an edge in $\mathcal{R}$, and so the length of every orbit of $\Aut(X)_v$ on $\mathcal{R}$ is divisible by $3$.
In fact the lengths of orbits of $\Aut(X)_v$ on $\mathcal{R}$ are divisible by $6$.
Suppose on the contrary that there exists an orbit of $\Aut(X)_v$ on $\mathcal{R}$ of length $3$.
Then the intersection $J$ of the edge stabilizer $\Aut(X)_e$, $e=uu'$ with $\Aut(X)_v$
is a Sylow $2$-group of $\Aut(X)_v$, and thus
isomorphic to $D_8\times \ZZ_2$. Since $\sigma$ flips the edge $e=uu'$, it follows that half of
the elements of $J$ fix $\{u,u'\}$ point-wise whereas the other half  
interchange $u$ and $u'$. Note however that the edge-stabilizer in a $5$-regular cubic graph is isomorphic to
$(D_8\times\ZZ_2)\rtimes\ZZ_2\cong\la z,x,y\mid z^8=x^2=y^2=1,xzx=z^{-1}, yzy=z^5\ra$ and 
has $7$ subgroups of order $16$. Amongst these $7$ subgroups
only the stabilizer of the arc $(u,u')$
 is isomorphic to $D_8\times \ZZ_2$,
 a contradiction.
It follows that the length
of every orbit of $\Aut(X)_v$ on $\mathcal{R}$
is divisible by $6$. We conclude that $\mathcal{R}$ is of even cardinality,
that is, $j$ is even.
It follows by (\ref{eq:j}) that $m$ and $n$ are of the same parity, and consequently  
  $\sigma$ has an even number of orbits
of length $2$, and is therefore an even automorphism.
Since, by (\ref{G5}), all involutions $p=\sigma$, $q$, $r$ and $s$ 
in a generating set of $\Aut(X)$ are conjugate, we may conclude that $\Aut(X)_v$, $v\in V(X)$,
consists of even automorphisms as claimed.
This completes the proof of Claim~2.

\medskip

We are now ready to prove that the existence of odd automorphisms in $X$
implies that $X$ is bipartite of order ${2\pmod 4}$.
So let us assume that there exists an odd automorphism in $\Aut(X)$.
In view of Claim~2 vertex stabilizers 
consist of even automorphisms, and so
$\Aut(X)$ contains an intransitive subgroup of index $2$  consisting of even 
automorphisms, forcing $X$ to be  bipartite. 
In addition, all automorphisms interchanging the two bipartition sets are odd.
In particular, the edge-flipping involution $a\in \Aut(X)$, 
which exists by (\ref{G5}), is odd.  It follows that the bipartition 
sets are of odd cardinality, that is, $n$ is odd. 
This completes the proof of Proposition~\ref{pro:5-4-oddx}.
\end{proof}

\medskip

The results contained in 
Propositions~\ref{pro:cubic-1-regular} -- \ref{pro:cubic-2-regular-2mod4} 
and~\ref{pro:cubic-2-regular-0mod4} -- \ref{pro:5-4-oddx}
now establish the proof of Theorem~\ref{thm:main}.

\medskip

\begin{proofT}
Let $X$ be a cubic symmetric graph. Then $X$ is of one of   $17$ possible 
types (see Subsection~\ref{sec:cubic-sym}).
Part (i) of Theorem~\ref{thm:main} follows from 
Proposition~\ref{pro:cubic-1-regular} for graphs of type $\{1\}$,
from 
Proposition~\ref{pro:cubic-4-2x} for 
graphs of type $\{4^2\}$, from Proposition~\ref{pro:3-2-odd} 
for graphs  
of types $\{1,2^1,2^2,3\}$, $\{2^1,2^2,3\}$, $\{2^1,3\}$ and $\{2^2,3\}$,
and from Proposition~\ref{pro:5-4-oddx} for graphs of types
$\{1,4^1,4^2,5\}$ and $\{4^1,4^2,5\}$.
Since every cubic symmetric graph of type $\{1,4^1\}$ is bipartite
(see Table~\ref{tab:cubic-types}), part (i) of Theorem~\ref{thm:main}  
for graphs of type $\{1,4^1\}$
follows from Proposition~\ref{pro:cubic-s-regular-without-s-1}.

Part (ii) of Theorem~\ref{thm:main} follows from 
Proposition~\ref{pro:cubic-s-regular-without-s-1}.

Part (iii) of Theorem~\ref{thm:main} follows from 
Propositions~\ref{pro:cubic-2-regular-2mod4} 
and~\ref{pro:cubic-2-regular-0mod4}.

Finally, part (iv) of Theorem~\ref{thm:main} follows from Proposition~\ref{pro:cubic-2-2} for 
graphs of type $\{2^2\}$, and from Proposition~\ref{pro:5-4-oddx} for graphs  
of types $\{4^1,5\}$ and  $\{4^2,5\}$
(since none of graphs of types $\{4^1,5\}$ and $\{4^2,5\}$ is bipartite).
\end{proofT}

\bigskip
\footnotesize
\noindent\textit{Acknowledgments.}
The authors wish to thank  Marston D.~E. Conder, Ademir Hujdurovi\'c and Aleksander Malni\v c
for helpful conversations about the material in this paper.

The work of Klavdija Kutnar  was supported in part by the Slovenian Research Agency 
(research program P1-0285 and research projects N1-0032, N1-0038, J1-6720, J1-6743, 
and J1-7051), in part by WoodWisdom-Net+, W$^3$B, and in part by NSFC project 11561021.
The work of Dragan Maru\v si\v c was supported in part by the Slovenian Research Agency (I0-0035, research program P1-0285 and research projects N1-0032, N1-0038, J1-5433,  J1-6720,
and J1-7051), and in part by H2020 Teaming InnoRenew CoE.

\end{document}